
\documentclass[12pt, reqno]{amsart}
     \makeatletter
     \def\section{\@startsection{section}{1}%
     \z@{.7\linespacing\@plus\linespacing}{.5\linespacing}%
     {\bfseries
     \centering
     }}
     \def\@secnumfont{\bfseries}
     \makeatother
\setlength{\textheight}{19.5 cm}
\setlength{\textwidth}{12.5 cm}
\newtheorem{theorem}{Theorem}[section]
\newtheorem{lemma}[theorem]{Lemma}
\newtheorem{proposition}[theorem]{Proposition}
\newtheorem{corollary}[theorem]{Corollary}
\theoremstyle{definition}

\theoremstyle{remark}
\newtheorem{remark}[theorem]{Remark}
\numberwithin{equation}{section}
\setcounter{page}{1}

\def \btheta {\mbox{\boldmath $\theta $}}

\def \bvarphi {\mbox{\boldmath $\varphi$}}

\def \diag  {\text {\rm diag}}

\def \diag  {\text {\rm diag}}
\def \lim   {\text {\rm lim}}

\begin{document}

\title[Two Remarks on Normality Preserving]{Two
Remarks on Normality Preserving Borel Automorphisms of
$\mathbb{R}^n$}

{\author{K. R. Parthasarathy}
 \address{Theoretical Statistics and Mathematics
 Unit, Indian Statistical Institute, 7, S. J. S. Sansanwal Marg, New
 Delhi-110 016, India}
 \email{krp@isid.ac.in}}


%

\begin{abstract}
Let  $T$ be a bijective map on $\mathbb{R}^n$ such that both $T$ and
$T^{-1}$ are Borel measurable. For any $\btheta \in \mathbb{R}^n$ and
any real $n \times n$ positive definite matrix $\Sigma,$ let $N
(\btheta, \Sigma)$ denote the $n$-variate normal (gaussian)
probability
measure on $\mathbb{R}^n$ with mean vector $\btheta$ and
covariance matrix $\Sigma.$ Here we prove the following two results:
(1) Suppose $N(\btheta_j, I)T^{-1}$ is gaussian for $0 \leq j \leq n$
where $I$ is the identity matrix and $\{\btheta_j - \btheta_0, 1 \leq
j \leq n \}$ is a basis for $\mathbb{R}^n.$ Then $T$ is an affine
linear transformation; (2) Let $\Sigma_j = I + \varepsilon_j
\mathbf{u}_j \mathbf{u}_j^{\prime},$ $1 \leq j \leq n$ where
$\varepsilon_j > -1$ for every $j$ and $\{\mathbf{u}_j, 1 \leq j \leq
n \}$ is a basis of unit vectors in $\mathbb{R}^n$ with
$\mathbf{u}_j^{\prime}$ denoting the transpose of the column vector
$\mathbf{u}_j.$ Suppose $N(\mathbf{0}, I)T^{-1}$ and $N (\mathbf{0},
\Sigma_j)T^{-1},$ $1 \leq j \leq n$ are gaussian. Then $T(\mathbf{x})
=
\sum\limits_{\mathbf{s}} 1_{E_{\mathbf{s}}} V \mathbf{s} U
\mathbf{x}$ a.e. $\mathbf{x}$ where $\mathbf{s}$ runs over the set of
$2^n$ diagonal matrices of order $n$ with diagonal entries $\pm 1,$
$U,\, V$ are $n \times n$ orthogonal matrices and $\{
E_{\mathbf{s}}\}$ is a collection of $2^n$ Borel subsets of
$\mathbb{R}^n$ such that $\{
E_{\mathbf{s}}\}$ and $\{V \mathbf{s} U (E_{\mathbf{s}})\}$ are
partitions of $\mathbb{R}^n$ modulo Lebesgue-null sets and for every
$j,$ $V \mathbf{s} U \Sigma_j (V \mathbf{s} U)^{-1}$ is independent of
all $\mathbf{s}$ for which the Lebesgue measure of $E_{\mathbf{s}}$
is positive. The converse of this result also holds.
\vskip0.1in
Our results constitute a sharpening of the results of S. Nabeya and
T. Kariya \cite{nskt}.
\vskip0.1in
\noindent{\it AMS Subject Classification\,\,\,}
62E10
\vskip0.1in 
\noindent{\it Key words and  phrases:} Borel
automorphism, normality preserving Borel automorphism, Lebesgue
partition.
\end{abstract}

\maketitle

\newpage
\section{Introduction and preliminaries}
\setcounter{equation}{0}

Let $(\Omega_i, \mathcal{F}_i),$ $i=1,2$ be measurable spaces and let
$\mathcal{P}_i$ be a set of probability measures on $\mathcal{F}_i$
for each $i.$ In the context of statistical theory, Basu and Khatri
\cite{bdkcg} posed the important problem of characterizing the set of
all measurable maps $T: \Omega_1 \rightarrow \Omega_2$ satisfying the
property that $\mu T^{-1} \in \mathcal{P}_2$ whenever $\mu \in
\mathcal{P}_1.$ They present many examples and solve the problem
completely when $\Omega_1 =  \mathbb{R}^n,$ $\Omega_2 =
\mathbb{R}^m,$ $\mathcal{F}_1$ and $\mathcal{F}_2$ are Borel
$\sigma$-algebras and $\mathcal{P}_1$ and $\mathcal{P}_2$ are sets of
all gaussian laws. When $\Omega_1 = \Omega_2 = \mathbb{R}^m,$
$$\mathcal{P}_1 = \mathcal{P}_2 =  \{\mu A^{-1}, A \,\, \mbox{an
affine linear transformation of} \,\, \mathbb{R}^n \}$$ 
where $\mu$ is
a fixed probability measure the problem looks particularly
interesting. When $n=1$ and $\mu$ is the symmetric Cauchy law this
problem has been given a complete solution by Letac \cite{lg}. In the
gaussian case on the real line there are many variations due to Ghosh
\cite{ghk} and Mase \cite{ms}.
\vskip0.1in
Nabeya and Kariya \cite{nskt} have studied the problem when $T$ is a
{\it Borel automorphism} of $\mathbb{R}^n,$ i.e., a bijective map on
$\mathbb{R}^n$ such that $T$ and $T^{-1}$ are Borel measurable,
$\mathcal{P}_2$ is the class of all nonsingular gaussian measures on
$\mathbb{R}^n$ and $\mathcal{P}_1$ is a restricted class of such
gaussian measures. Here we follow their approach and obtain a
sharpened version of their results after a more detailed analysis in
two cases when $\mathcal{P}_2$ is the set of all nonsingular gaussian
measures in $\mathbb{R}^n$ but $\mathcal{P}_1$ is (1) a set of $n+1$
gaussian measures with a fixed nonsingular covariance matrix and mean
vectors $\btheta_j,$ $0 \leq j \leq n$ where
$\{\btheta_j -\btheta_0, 0 \leq j \leq n  \}$ is a
basis for $\mathbb{R}^n;$ (2) a set of $(n+1)$ gaussian measures with
a fixed mean vector $\btheta$ and nonsingular covariance
matrices $\Sigma_j,$ $0 \leq j \leq n$ such that
Rank$(\Sigma_j - \Sigma_0)=1$ and there exists a basis
$\{\mathbf{u}_j, 1 \leq j \leq n \}$ of unit vectors in
$\mathbb{R}^n$ satisfying the condition $\mathbf{u}_j \in \,\,{\rm
Range}\,\, (\Sigma_j - \Sigma_0)$ for each $j = 1,2,
\ldots, n.$ The main result of Nabeya and Kariya in the gaussian case
follows as a corollary.
\vskip0.1in
We conclude this section with some notations and definitions that
will be used in the following sections. Let $GL(n),$ $\mathcal{S}_{+}(n)$ and
$D_n$ denote respectively the set of all real $n \times n$
nonsingular matrices, positive definite matrices and diagonal
matrices with entries $\pm 1.$ For any matrix $A$ denote by
$A^{\prime}$ its transpose. We express any element of $\mathbb{R}^n$
as a $n \times 1$ matrix and for any $\btheta \in
\mathbb{R}^n,$ $\Sigma \in \mathcal{S}_{+} (n),$ denote by $N(\btheta,
\Sigma)$ the gaussian probability measure in $\mathbb{R}^n$ with
density function $(2 \pi)^{-n/2} |\Sigma|^{-1/2} \exp - 1/2
(\mathbf{x} - \btheta)^{\prime} \Sigma^{-1} (\mathbf{x} -
\btheta), |\Sigma|$ being the determinant of $\Sigma.$ A Borel
map $T$ on $\mathbb{R}^n$ is called an {\it affine automorphism} if
$T(\mathbf{x}) = A \mathbf{x} + \mathbf{a}$ a.e. $\mathbf{x}$ with
respect to Lebesgue measure where $A \in GL(n)$ and $\mathbf{a} \in
\mathbb{R}^n.$ All almost everywhere statements in $\mathbb{R}^n$
will be with respect to Lebesgue measure. By a {\it Lebesgue
partition} over $D_n$ we mean a collection $\{E_{\mathbf{s}},
\mathbf{s} \in D_n \}$ of Borel sets with the property that all the
sets $\mathbb{R}^n \backslash \underset{\mathbf{s} \in D_{n}}{\cup}
E_{\mathbf{s}},$ $E_{\mathbf{s}} \cap E_{\mathbf{t}},$ $\mathbf{s}
\neq \mathbf{t}$ have zero Lebesgue measure.
\vskip0.1in
If $T$ is an affine automorphism of $\mathbb{R}^n$ such that $T
(\mathbf{x}) = A \mathbf{x} + \mathbf{a}$ a.e. $\mathbf{x}$ for some
$A \in GL(n),$ $\mathbf{a} \in \mathbb{R}^n$ and $N
(\btheta, \Sigma)$ is a gaussian probability measure with mean
vector $\btheta$ and covariance matrix $\Sigma$ then
$N(\btheta, \Sigma)T^{-1} = N(A \btheta+\mathbf{a}, A
\Sigma A^{\prime}).$  We say that two Borel automorphisms $T_1$ and
$T_2$ are {\it affine equivalent} if $T_2 = R T_1 S$ a.e. for some
affine automorphisms $R$ and $S.$ If $T$ is a Borel automorphism of
$\mathbb{R}^n$ and $N(\btheta, \Sigma)T^{-1} = N
(\bvarphi, \Psi)$ for some $\btheta,$
$\bvarphi \in \mathbb{R}^n,$ $\Sigma,$ $\Psi \in GL(n)$ then
there exists a Borel automorphism $\widetilde{T}$ affine equivalent
to $T$ such that $N(\mathbf{0}, I)\widetilde{T}^{-1} = N(\mathbf{0},
I),$ i.e., $\widetilde{T}$ preserves the standard gaussian
probability measure with mean vector $\mathbf{0}$ and covariance
matrix $I.$ Thus problems of the type we have described in the
context of gaussian measures can be translated to the case when the
Borel automorphism preserves the standard gaussian measure in
$\mathbb{R}^n.$

\section{Borel automorphisms preserving the normality of a pair of
normal distributions}
\setcounter{equation}{0}
Let $T$ be a Borel automorphism of $\mathbb{R}^n$ such that
\begin{equation}
N(\btheta_i, \Sigma_i)T^{-1} = N(\bvarphi_i,
\Psi_i), \,\,i=1,2   \label{eq2.1}
\end{equation}
where $\Sigma_i, \Psi_i \in \mathcal{S}_{+}(n),$ $\btheta_i,
\bvarphi_{i} \in \mathbb{R}^n$ for each $i.$  Our aim is to
establish the following proposition which, together with its proof,
is a slight variation of Lemma 2.1 and Lemma 2.2 together in the
paper of Nabeya and Kariya \cite{nskt}.
\vskip0.1in
\begin{proposition}\label{prop2.1}
Under condition \eqref{eq2.1} the following hold:
\begin{enumerate}
 \item[(1)] $\Psi_1^{1/2}$  $\Psi_2^{-1}$  $\Psi_1^{1/2}$  and
$\Sigma_1^{1/2}$  $\Sigma_2^{-1}$  $\Sigma_1^{1/2}$ have the same
characteristic polynomial.
\vskip0.1in
\item[(2)] 
          $ (\bvarphi_1 - \bvarphi_2)^{\prime}
\Psi_2^{-1} (\Psi_1^{-1} - z \Psi_2^{-1})^{-1} \Psi_2^{-1}
(\bvarphi_1 - \bvarphi_2) $\\
$= (\btheta_1 - \btheta_2)^{\prime}
\Sigma_2^{-1} (\Sigma_1^{-1} - z \Sigma_2^{-1})^{-1} \Sigma_2^{-1}
(\btheta_1 - \btheta_2) $
as analytic functions of $z.$
\vskip0.1in
\item[(3)] $(\bvarphi_1 - \bvarphi_2)^{\prime}
\Psi_2^{-1} (\bvarphi_1 - \bvarphi_2) =
(\btheta_1 - \btheta_2)^{\prime} \Sigma_2^{-1}
(\btheta_1 - \btheta_2).$

\vskip0.1in
\item[(4)] $(T(\mathbf{x}) - \bvarphi_2)^{\prime} \Psi_2^{-1}
(T(\mathbf{x}) - \bvarphi_2) - (T(\mathbf{x}) -
\bvarphi_1)^{\prime} \Psi_1^{-1} (T(\mathbf{x}) -
\bvarphi_1)$\\
$=(\mathbf{x} - \btheta_2)^{\prime} \Sigma_2^{-1} (\mathbf{x} -
\btheta_2) - (\mathbf{x} -
\btheta_1)^{\prime} \Sigma_1^{-1} (\mathbf{x} -
\btheta_1)  $\\
a.e. $\mathbf{x}.$ 
\end{enumerate}
\end{proposition}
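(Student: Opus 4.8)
The plan is to derive all four assertions from a single Radon--Nikodym identity together with the moment generating function identity it forces. Write $\mu_i=N(\btheta_i,\Sigma_i)$, $\nu_i=N(\bvarphi_i,\Psi_i)$, with densities $p_i,q_i$. Since any two nonsingular gaussian measures on $\mathbb{R}^n$ are mutually equivalent and $\mu_iT^{-1}=\nu_i$, a change of variables using the bijectivity of $T$ gives $d\nu_1/d\nu_2=(d\mu_1/d\mu_2)\circ T^{-1}$, that is, $(q_1/q_2)(T\mathbf{x})=(p_1/p_2)(\mathbf{x})$ for a.e.\ $\mathbf{x}$. Taking logarithms and rearranging, this becomes
\[
Q(T\mathbf{x})=P(\mathbf{x})+\delta\qquad\text{for a.e. }\mathbf{x},
\]
where $P(\mathbf{x})=(\mathbf{x}-\btheta_2)'\Sigma_2^{-1}(\mathbf{x}-\btheta_2)-(\mathbf{x}-\btheta_1)'\Sigma_1^{-1}(\mathbf{x}-\btheta_1)$, $Q$ is the analogous difference of quadratic forms built from $\bvarphi_i,\Psi_i$, and $\delta=\log\big(|\Sigma_2|\,|\Psi_1|/(|\Sigma_1|\,|\Psi_2|)\big)$. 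This is already assertion (4), \emph{provided} we can show $\delta=0$; so the remaining tasks are to identify the characteristic polynomials and the mean-vector quantities, and to verify $\delta=0$ along the way.

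The second step is to pass to distributions. Because $T$ carries $\mu_1$ onto $\nu_1$, the law of $Q$ under $\nu_1$ coincides with the law of $P+\delta$ under $\mu_1$, hence their moment generating functions agree. Substituting the standard normal vector $\mathbf{z}=\Sigma_1^{-1/2}(\mathbf{x}-\btheta_1)$ one gets $P=\mathbf{z}'(M-I)\mathbf{z}+2\mathbf{b}'\mathbf{z}+c_*$ with $M=\Sigma_1^{1/2}\Sigma_2^{-1}\Sigma_1^{1/2}$, $\mathbf{b}=\Sigma_1^{1/2}\Sigma_2^{-1}(\btheta_1-\btheta_2)$, $c_*=(\btheta_1-\btheta_2)'\Sigma_2^{-1}(\btheta_1-\btheta_2)$, so the standard gaussian quadratic-form integral yields, for real $t$ near $0$,
\[
\mathbb{E}_{\mu_1}\big[e^{tP}\big]=e^{tc_*}\,\det\big(I-2t(M-I)\big)^{-1/2}\exp\!\Big(2t^2\,\mathbf{b}'\big(I-2t(M-I)\big)^{-1}\mathbf{b}\Big),
\]
and likewise for $\mathbb{E}_{\nu_1}[e^{tQ}]$ with $M,\mathbf{b},c_*$ replaced by $\widetilde M=\Psi_1^{1/2}\Psi_2^{-1}\Psi_1^{1/2}$, $\widetilde{\mathbf{b}}=\Psi_1^{1/2}\Psi_2^{-1}(\bvarphi_1-\bvarphi_2)$, $\widetilde c_*=(\bvarphi_1-\bvarphi_2)'\Psi_2^{-1}(\bvarphi_1-\bvarphi_2)$. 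The equality of laws says $\mathbb{E}_{\nu_1}[e^{tQ}]=e^{t\delta}\,\mathbb{E}_{\mu_1}[e^{tP}]$, and by analytic continuation these explicit expressions agree as analytic functions of $t$.

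Now one reads off the conclusions from this functional identity. Of its three factors only $\det(I-2t(M-I))^{-1/2}$ contributes algebraic branch points, at $t=-1/(2(1-\lambda))$ for each eigenvalue $\lambda\neq1$ of $M$; at such a point the exponential factor is either regular or has an essential singularity. Matching the two sides at these singularities --- the algebraic part of the local expansion --- forces $\det(I-2t(M-I))\equiv\det(I-2t(\widetilde M-I))$, i.e.\ $M$ and $\widetilde M$ have the same characteristic polynomial, which is (1). In particular $|M|=|\widetilde M|$, i.e.\ $|\Sigma_1|/|\Sigma_2|=|\Psi_1|/|\Psi_2|$, so $\delta=0$; putting $\delta=0$ back into the a.e.\ identity above gives (4). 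After cancelling the (now equal) determinant factors and the factor $e^{t\delta}=1$ and taking logarithms, the identity reduces to $tc_*+2t^2\mathbf{b}'(I-2t(M-I))^{-1}\mathbf{b}\equiv t\widetilde c_*+2t^2\widetilde{\mathbf{b}}'(I-2t(\widetilde M-I))^{-1}\widetilde{\mathbf{b}}$; its coefficient of $t$ gives $c_*=\widetilde c_*$, which is (3), and the remaining part gives $\mathbf{b}'(I-2t(M-I))^{-1}\mathbf{b}\equiv\widetilde{\mathbf{b}}'(I-2t(\widetilde M-I))^{-1}\widetilde{\mathbf{b}}$. Writing $I-2t(\widetilde M-I)=\Psi_1^{1/2}\big[(1+2t)\Psi_1^{-1}-2t\Psi_2^{-1}\big]\Psi_1^{1/2}$ (and similarly on the $\Sigma$ side) and substituting $z=2t/(1+2t)$ transforms this last identity into precisely (2).

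The step I expect to be the genuine obstacle is the one yielding (1): one has to argue rigorously that two analytic functions of the form $e^{(\text{linear})}\cdot(\text{algebraic})\cdot\exp(\text{rational})$ can coincide only if their algebraic factors coincide --- equivalently, that $\mathbb{E}_{\mu_1}[e^{tP}]$ determines the multiset, with multiplicities, of the branch points of $\det(I-2t(M-I))^{-1/2}$ --- which is a point of complex analysis rather than linear algebra. (When $\btheta_1=\btheta_2$, so $\mathbf{b}=\widetilde{\mathbf{b}}=0$, this degenerates to ordinary matching of the cumulants $2^{k-1}(k-1)!\,\tr\big((M-I)^k\big)$, hence of the eigenvalues of $M$ and $\widetilde M$.) Everything downstream of (1) --- the vanishing of $\delta$, assertion (4), and the Möbius substitution producing (2) and (3) --- is a routine computation.
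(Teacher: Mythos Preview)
Your proposal is correct and follows essentially the same route as the paper: both start from the Radon--Nikodym identity $(q_1/q_2)\circ T=p_1/p_2$ a.e., then integrate a one-parameter family of powers of this ratio against $N(\btheta_1,\Sigma_1)$ (your $\mathbb{E}_{\mu_1}[e^{tP}]$ is, up to a determinantal factor, exactly the paper's $\int (f_1/f_2)^{2t}\,dN(\btheta_1,\Sigma_1)$), separate the resulting identity into an ``algebraic'' determinant factor and an exponential of a rational function, and use a complex-analysis argument to force these to match separately.

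The one technical difference worth noting is that the paper \emph{squares} the identity before analysing it, so that the determinant factor becomes a genuine rational function of $t$ rather than one with branch points; the identity then reads $(\text{ratio of polynomials})=e^{\chi(t)}$ with $\chi$ rational, and the complex-analysis step (which the paper defers to Nabeya--Kariya \cite{nskt}) is the clean observation that a rational function equal to the exponential of a rational function must be constant. Your version, keeping $\det(\cdot)^{-1/2}$ and arguing via algebraic branch points versus essential singularities, is valid but slightly more delicate to make rigorous --- the squaring trick sidesteps exactly the difficulty you flagged. Everything downstream (the substitution $z=2t/(1+2t)=t/(t+\tfrac12)$, the extraction of (2), (3), (4)) matches the paper's computation.
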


\begin{proof}
Let $f_i,$ $\widetilde{f}_i$ denote respectively the density
functions of $N(\btheta_i, \Sigma_i),$ $N (\bvarphi_i,
\Psi_i),$ $i =1,2.$ Equation \eqref{eq2.1} implies, in particular,
that the Lebesgue measure $L$ in $\mathbb{R}^n$ is equivalent to the
measure $LT^{-1}$ and for any real $t$

\begin{equation}
\left [\frac{\widetilde{f}_1}{\widetilde{f}_2} (T(\mathbf{x})) \right
]^{2t} = \left [\frac{f_1}{f_2} (\mathbf{x}) \right ]^{2t} \quad
\mbox{a.e.} \,\,\mathbf{x}. \label{eq2.2}
\end{equation}
Integrating both sides with respect to the probability measure
$N(\btheta_1, \Sigma_1)$ and using \eqref{eq2.1} for $i=1$ we
get for all $t$ in a neighbourhood of $0$
\begin{eqnarray*}
\lefteqn{|\Psi_2|^t |\Psi_1|^{-(t+1/2)} \left |(t+\frac{1}{2})
\Psi_1^{-1} - t \Psi_2^{-1} \right |^{-\frac{1}{2}}}\\
&\times& \exp t (\bvarphi_1 - \bvarphi_2)^{\prime}
\Psi_2^{-1} (\bvarphi_1 - \bvarphi_2) + \\
&&  t^2 \left
\{(\bvarphi_1 - \bvarphi_2)^{\prime} \Psi_2^{-1} \left
((t+\frac{1}{2}) \Psi_1^{-1} - t \Psi_2^{-1} \right )^{-1} \Psi_2^{-1}
(\bvarphi_1 - \bvarphi_2) \right \} \\
&=& |\Sigma_2|^t |\Sigma_1|^{-(t+\frac{1}{2})} |(t+\frac{1}{2})
\Sigma_1^{-1} - t \Sigma_2^{-1}|^{-\frac{1}{2}}\\
&\times& \exp \,\,t \, (\btheta_1 -
\btheta_2)^{\prime} \Sigma_2^{-1} (\btheta_1 -
\btheta_2) \\
&& + t^2 \left \{(\btheta_1 - \btheta_2)^{\prime}
\Sigma_2^{-1} \left ((t+\frac{1}{2}) \Sigma_1^{-1} - t \Sigma_2^{-1}
\right
)^{-1} \Sigma_2^{-1} (\btheta_1 - \btheta_2 )  \right
\}.
\end{eqnarray*}
Squaring both sides and rearranging the terms this can be expressed as
$$\frac{\left |(t+\frac{1}{2}) \Psi_1^{-1} - t \Psi_2^{-1}  \right
|}{\left |(t+\frac{1}{2}) \Sigma_1^{-1} - t \Sigma_2^{-1} \right |} =
e^{\chi(t)}$$
where the left hand side and the function $\chi(t)$ in the exponent
of the right hand side are rational functions of $t$ in a
neighbourhood of $0$ in $\mathbb{R}.$ By exactly the same
arguments using the theory of analytic functions as in the proof of
Lemma 2.2 in \cite{nskt} we now conclude
\begin{eqnarray}
|\Sigma_1| \,\, \left |(t+\frac{1}{2}) \Sigma_1^{-1} -t \Sigma_2^{-1} 
\right |  &=& |\Psi_1| \,\, \left |(t+\frac{1}{2}) \Psi_1^{-1} -t
\Psi_2^{-1}  \right |, \nonumber \\ 
\chi(t) &=& 0   \label{eq2.3}
\end{eqnarray}
for all $t$ in a neighbourhood of $0.$ The first part of Equation
\eqref{eq2.3} implies
property (1) of the proposition and, in particular,
\begin{equation}
|\Sigma_1| \,\,|\Sigma_2|^{-1} = |\Psi_1| \,\,|\Psi_2|^{-1}.  
\label{eq2.5}
\end{equation}
Now the second part of equation \eqref{eq2.3} can be written
as
\begin{eqnarray*}
\lefteqn{t \left \{(\btheta_1 - \btheta_2)^{\prime}
\Sigma_2^{-1} (\btheta_1 - \btheta_2)-
(\bvarphi_1 - \bvarphi_2)^{\prime}
\Psi_2^{-1} (\bvarphi_1 - \bvarphi_2)\right \} } \\
&+& t^2 \left \{ (\btheta_1 - \btheta_2)^{\prime}
\Sigma_2^{-1} \left ((t+\frac{1}{2}) \Sigma_1^{-1} - t \Sigma_2^{-1} 
\right
)^{-1} \Sigma_2^{-1} (\btheta_1 - \btheta_2) \right
. \\
&& \left . - (\bvarphi_1 - \bvarphi_2)^{\prime}
\Psi_2^{-1} \left ((t+\frac{1}{2}) \Psi_1^{-1} - t \Psi_2^{-1} \right
)^{-1} \Psi_2^{-1} (\bvarphi_1 - \bvarphi_2) \right
\} = 0
\end{eqnarray*}
in a neighbourhood of $0.$ Dividing by $t$ and letting $t \rightarrow
0$ we get property (3). After deleting the first term, dividing by
$t^2$ and replacing the parameter $(t+\frac{1}{2})^{-1} t$ by $z$ we
get property (2). Going back to equation \eqref{eq2.2}, putting
$t=1,$ using \eqref{eq2.5} and taking logarithms we get property (4).
This completes the proof.
\end{proof}
\vskip0.1in
\begin{corollary}\label{cor2.2}
Under condition \eqref{eq2.1} the following hold:
\begin{enumerate}
 \item[(i)] If $\Sigma_1 = \Sigma_2$ then $\Psi_1 = \Psi_2;$
\vskip0.1in
 \item[(ii)] If $\btheta_1 = \btheta_2$ then
$\bvarphi_1 =\bvarphi_2. $
\end{enumerate}
\end{corollary}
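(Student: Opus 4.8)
The plan is to read off both statements directly from Proposition~\ref{prop2.1}, with no further examination of $T$ itself.

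For part~(i), assume $\Sigma_1=\Sigma_2=:\Sigma$. Then $\Sigma_1^{1/2}\Sigma_2^{-1}\Sigma_1^{1/2}=\Sigma^{1/2}\Sigma^{-1}\Sigma^{1/2}=I$, so by property~(1) of the proposition the matrix $M:=\Psi_1^{1/2}\Psi_2^{-1}\Psi_1^{1/2}$ has characteristic polynomial $(\lambda-1)^n$; i.e. $1$ is its only eigenvalue, with multiplicity $n$. The point to notice here is that $M$ is not an arbitrary matrix with this characteristic polynomial: it is \emph{symmetric}, since $M'=\Psi_1^{1/2}\Psi_2^{-1}\Psi_1^{1/2}=M$ (both $\Psi_1^{1/2}$ and $\Psi_2^{-1}$ being symmetric), and in fact positive definite, being congruent to $\Psi_2^{-1}$ via $\mathbf{x}'M\mathbf{x}=(\Psi_1^{1/2}\mathbf{x})'\Psi_2^{-1}(\Psi_1^{1/2}\mathbf{x})$. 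A symmetric matrix is orthogonally diagonalizable, so if all its eigenvalues equal $1$ it equals $I$. Hence $\Psi_1^{1/2}\Psi_2^{-1}\Psi_1^{1/2}=I$, and since $\Psi_1^{1/2}\in GL(n)$ we may multiply on both sides by $\Psi_1^{-1/2}$ to get $\Psi_2^{-1}=\Psi_1^{-1}$, i.e. $\Psi_1=\Psi_2$.

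For part~(ii), assume $\btheta_1=\btheta_2$. Then the right-hand side of the identity in property~(3) is $(\btheta_1-\btheta_2)'\Sigma_2^{-1}(\btheta_1-\btheta_2)=0$, so $(\bvarphi_1-\bvarphi_2)'\Psi_2^{-1}(\bvarphi_1-\bvarphi_2)=0$. Since $\Psi_2\in\mathcal{S}_+(n)$, its inverse $\Psi_2^{-1}$ is positive definite, and a positive definite quadratic form vanishes only at the origin; therefore $\bvarphi_1-\bvarphi_2=\mathbf{0}$, i.e. $\bvarphi_1=\bvarphi_2$.

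Both arguments are short, and there is no serious obstacle. The one step that is not purely mechanical is the observation in~(i) that property~(1) hands us a symmetric (indeed positive definite) matrix, which is exactly what promotes the statement ``all eigenvalues equal $1$'' to ``the matrix is $I$''; for a general matrix, having characteristic polynomial $(\lambda-1)^n$ would not suffice. In~(ii) all that is used beyond property~(3) is the positive-definiteness of $\Psi_2^{-1}$, which is immediate from $\Psi_2\in\mathcal{S}_+(n)$.
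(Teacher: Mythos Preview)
Your proof is correct and follows exactly the approach of the paper, which simply cites properties~(1) and~(3) of Proposition~\ref{prop2.1} for parts~(i) and~(ii) respectively without spelling out the details. You have merely filled in those details (in particular the observation that $\Psi_1^{1/2}\Psi_2^{-1}\Psi_1^{1/2}$ is symmetric, so that its characteristic polynomial being $(\lambda-1)^n$ forces it to equal $I$), which the paper leaves to the reader.
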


\vskip0.1in
\begin{proof}
Property (i) follows from property (1) in Proposition \ref{prop2.1}
whereas property (ii) is a consequence of property (3) of the same
proposition.
\end{proof}

 \section{The first characterization theorem}
 \setcounter{equation}{0}

Here we consider a Borel automorphism $T$ of $\mathbb{R}^n$
satisfying the property that $N(\btheta, \Sigma) T^{-1}$  is a
gaussian probability measure for a fixed $\Sigma$ in
$\mathcal{S}_{+}(n)$ and the
mean vector $\btheta$ varying in a natural set of $(n+1)$
points in $\mathbb{R}^n.$ We shall make use of the remark at the end
of Section 1.
\vskip0.1in
\begin{theorem}\label{thm3.1}
Let $T$ be a Borel automorphism of $\mathbb{R}^n$ and let
$\{\btheta_j, 0 \leq j \leq n \} \subset \mathbb{R}^n$  be a
set of $(n+1)$ points with the property that $\btheta_j -
\btheta_0,$ $j=1,2,\ldots,n$ is a basis for $\mathbb{R}^n.$
Suppose
$$N (\btheta_j, I) T^{-1} = N (\bvarphi_j, \Psi_j),
\,\, 0 \leq j \leq n. $$ 
Then $T$ is an affine automorphism.
\end{theorem}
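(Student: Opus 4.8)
The plan is to reduce to the standard case and then squeeze enough linear relations out of Proposition~\ref{prop2.1}. By the remark at the end of Section~1, after replacing $T$ by an affine-equivalent Borel automorphism (which does not affect the conclusion, since a composition of affine automorphisms is again one) we may assume $\btheta_0 = \mathbf{0}$, $\bvarphi_0 = \mathbf{0}$, $\Psi_0 = I$, so that $N(\mathbf{0}, I)T^{-1} = N(\mathbf{0}, I)$; the hypothesis then reads $N(\btheta_j, I)T^{-1} = N(\bvarphi_j, \Psi_j)$ for $1 \le j \le n$ with $\{\btheta_j : 1 \le j \le n\}$ a basis of $\mathbb{R}^n$. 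Applying Corollary~\ref{cor2.2}(i) to the pair of measures indexed by $0$ and $j$ (both covariances equal $I$) gives $\Psi_j = \Psi_0 = I$ for every $j$, so in fact $N(\btheta_j, I)T^{-1} = N(\bvarphi_j, I)$ for $0 \le j \le n$.

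Now apply Proposition~\ref{prop2.1} to pairs drawn from these $n+1$ measures, all of whose covariance matrices are $I$. Property~(3) applied to the pair $(0,j)$ yields $|\bvarphi_j|^2 = |\btheta_j|^2$, and applied to a general pair $(i,j)$ yields $|\bvarphi_i - \bvarphi_j|^2 = |\btheta_i - \btheta_j|^2$; by polarization these give $\langle \bvarphi_i, \bvarphi_j\rangle = \langle \btheta_i, \btheta_j\rangle$ for all $i,j$, so the Gram matrix of $(\bvarphi_1, \ldots, \bvarphi_n)$ equals that of $(\btheta_1, \ldots, \btheta_n)$; since the latter is nonsingular, $\{\bvarphi_j : 1 \le j \le n\}$ is also a basis of $\mathbb{R}^n$. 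Property~(4) applied to the pair $(0,j)$ reads $|T(\mathbf{x}) - \bvarphi_j|^2 - |T(\mathbf{x})|^2 = |\mathbf{x} - \btheta_j|^2 - |\mathbf{x}|^2$ a.e.\ $\mathbf{x}$; expanding the squares and using $|\bvarphi_j|^2 = |\btheta_j|^2$ one obtains $\langle \bvarphi_j, T(\mathbf{x})\rangle = \langle \btheta_j, \mathbf{x}\rangle$ a.e.\ $\mathbf{x}$, for each $j = 1, \ldots, n$.

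Intersecting the finitely many exceptional null sets, for almost every $\mathbf{x}$ we have $\Phi' T(\mathbf{x}) = \Theta' \mathbf{x}$, where $\Phi$ and $\Theta$ are the matrices in $GL(n)$ whose columns are $\bvarphi_1, \ldots, \bvarphi_n$ and $\btheta_1, \ldots, \btheta_n$ respectively. Hence $T(\mathbf{x}) = (\Phi')^{-1} \Theta' \mathbf{x}$ a.e.\ $\mathbf{x}$, which exhibits $T$ as an affine (indeed linear) automorphism of $\mathbb{R}^n$, and undoing the initial reduction gives the theorem. The step requiring the most care is the verification that $\{\bvarphi_j\}$ is again a basis — without it one cannot solve for $T(\mathbf{x})$ — and this is precisely where the polarization identity and the nondegeneracy of the Gram matrix enter; the rest is routine bookkeeping once Proposition~\ref{prop2.1} is available.
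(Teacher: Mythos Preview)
Your proof is correct and follows essentially the same route as the paper: invoke Corollary~\ref{cor2.2}(i) to equalize the output covariances, use Proposition~\ref{prop2.1}(3) to match norms/distances and thereby establish that the $\bvarphi_j$'s form a basis, and then extract $n$ independent linear equations for $T(\mathbf{x})$ from Proposition~\ref{prop2.1}(4). The only cosmetic difference is that you first normalize via affine equivalence to $\btheta_0=\bvarphi_0=\mathbf{0}$, $\Psi_0=I$ and argue with Gram matrices, whereas the paper keeps a general common $\Psi$ and phrases the basis step as the existence of an orthogonal $U$ with $U(\btheta_i-\btheta_0)=\Psi^{-1/2}(\bvarphi_i-\bvarphi_0)$.
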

\vskip0.1in
\begin{proof}
 By part (i) of Corollary \ref{cor2.2} there exists $\Psi \in
\mathcal{S}_{+}(n)$ such that $\Psi_j = \Psi$ for every $j.$ From
property (3)
of Proposition \ref{prop2.1} we have
\begin{equation}
(\bvarphi_i - \bvarphi_j)^{\prime} \Psi^{-1}
(\bvarphi_i - \bvarphi_j) = (\btheta_i -
\btheta_j)^{\prime} (\btheta_i -
\btheta_j) \label{eq3.1}
\end{equation}
for all $i,j \in \{0,1,2,\ldots,n \}.$ Hence the correspondence
$$ (\btheta_i -\btheta_0) \rightarrow \Psi^{-1/2}
(\bvarphi_i -\bvarphi_0), \,\, 1 \leq i \leq n$$
is distance preserving in $\mathbb{R}^n$ with the Euclidean metric.
Since $\{\btheta_i - \btheta_0, \,\, 1 \leq i \leq
n\}$ is a basis for $\mathbb{R}^n$ it follows that there exists an
orthogonal matrix $U$ such that
\begin{equation}
U (\btheta_i - \btheta_0)  = \Psi^{-1/2}
(\bvarphi_i - \bvarphi_0), \,\, 1 \leq i \leq n.
\label{eq3.2}
\end{equation}
Using property (4) of Proposition \ref{prop2.1} we have
\begin{eqnarray*}
\lefteqn{(T (\mathbf{x}) - \bvarphi_i)^{\prime} \Psi^{-1}
\left ( T (\mathbf{x}) - \bvarphi_i \right ) - (T
(\mathbf{x}) - \bvarphi_j)^{\prime} \Psi^{-1} (T
(\mathbf{x}) - \bvarphi_j) }\\
&=& (\mathbf{x} - \btheta_i)^{\prime} (\mathbf{x} -
\btheta_i)-(\mathbf{x} - \btheta_j)^{\prime}
(\mathbf{x} -
\btheta_j), i,j \in \{0,1,2, \ldots, n\} \\
&& \mbox{a.e.} \,\,\mathbf{x}.
\end{eqnarray*}
Expressing $T (\mathbf{x}) - \bvarphi_i = T(\mathbf{x}) -
\bvarphi_0 + \bvarphi_0 - \bvarphi_i,$
$\mathbf{x} - \btheta_i = \mathbf{x} - \btheta_0 +
\btheta_0 - \btheta_i$ for each $i$ and expanding
both sides of the equation above we obtain by using \eqref{eq3.1} the
equation
$$(\bvarphi_i - \bvarphi_j)^{\prime} \Psi^{-1}
(T(\mathbf{x}) - \bvarphi_0) = (\btheta_i -
\btheta_j)^{\prime} (\mathbf{x} - \btheta_0)
\,\,\mbox{a.e.} \,\, \mathbf{x}. $$
Writing
$$ A = \left [\begin{array}{c}(\btheta_1 -
\btheta_0)^{\prime} \\ \vdots \\ (\btheta_n -
\btheta_0)^{\prime}\end{array} \right ], \quad B=\left
[\begin{array}{c}(\bvarphi_1 -
\bvarphi_0)^{\prime} \Psi^{-1}\\ \vdots \\ (\bvarphi_n
-
\bvarphi_0)^{\prime} \Psi^{-1}\end{array} \right ]$$
and using \eqref{eq3.2} we conclude that $A$ and $B$ are nonsingular
matrices and
$$T (\mathbf{x}) = \bvarphi_0 + B^{-1} A (\mathbf{x} -
\btheta_0) \,\,\mbox{a.e.}$$
In other words $T$ is an affine automorphism.
\end{proof}

\vskip0.1in
\begin{corollary}\label{cor3.2}
 Let $T$ be a Borel automorphism of $\mathbb{R}^n$ and let
$\{\btheta_j, 0 \leq j \leq n\} \subset \mathbb{R}^n$ be  a
set of $n+1$ points such that $\{\btheta_j -
\btheta_0, 1 \leq j \leq n\} \subset \mathbb{R}^n$  is a
basis for $\mathbb{R}^n.$ Suppose there exists an $n \times n$
nonsingular covariance matrix $\Sigma$ such that
$$N (\btheta_j, \Sigma) T^{-1} = N(\bvarphi_j,\,\,
\Psi_j),
\,\, 0 \leq j \leq n.$$
Then $T$ is an affine automorphism.
\end{corollary}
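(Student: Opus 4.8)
The plan is to deduce Corollary \ref{cor3.2} from Theorem \ref{thm3.1} by composing $T$ with a fixed affine automorphism that normalizes the common covariance matrix $\Sigma$ to the identity, in the spirit of the affine-equivalence remark at the end of Section 1. Let $\Sigma^{1/2} \in \mathcal{S}_{+}(n)$ be the positive definite square root of $\Sigma$, let $S$ be the affine automorphism $S(\mathbf{x}) = \Sigma^{1/2}\mathbf{x}$, and set $\wt T = T \circ S$, which is again a Borel automorphism of $\mathbb{R}^n$. Put $\wt\btheta_j = \Sigma^{-1/2}\btheta_j$ for $0 \le j \le n$. Since $\Sigma^{-1/2} \in GL(n)$ and $\{\btheta_j - \btheta_0 : 1 \le j \le n\}$ is a basis for $\mathbb{R}^n$, the vectors $\wt\btheta_j - \wt\btheta_0 = \Sigma^{-1/2}(\btheta_j - \btheta_0)$, $1 \le j \le n$, again form a basis for $\mathbb{R}^n$.

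Next I would check that $\wt T$ satisfies the hypothesis of Theorem \ref{thm3.1}. Using the transformation rule for gaussian measures under affine automorphisms recalled in Section 1, $N(\wt\btheta_j, I)S^{-1} = N(\Sigma^{1/2}\wt\btheta_j,\, \Sigma^{1/2} I \Sigma^{1/2}) = N(\btheta_j, \Sigma)$, and hence
\begin{equation*}
N(\wt\btheta_j, I)\wt T^{-1} = \bigl( N(\wt\btheta_j, I)S^{-1} \bigr) T^{-1} = N(\btheta_j, \Sigma)T^{-1} = N(\bvarphi_j, \Psi_j), \qquad 0 \le j \le n,
\end{equation*}
which is gaussian for every $j$. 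Theorem \ref{thm3.1} therefore applies to $\wt T$ and yields $C \in GL(n)$ and $\mathbf{c} \in \mathbb{R}^n$ with $\wt T(\mathbf{x}) = C\mathbf{x} + \mathbf{c}$ a.e. $\mathbf{x}$.

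Finally I would transfer this conclusion back to $T$. Writing $\mathbf{y} = \Sigma^{1/2}\mathbf{x}$, i.e.\ $\mathbf{x} = \Sigma^{-1/2}\mathbf{y}$, and using that the linear bijection $\Sigma^{1/2}$ carries Lebesgue-null sets to Lebesgue-null sets, the identity $T(\Sigma^{1/2}\mathbf{x}) = \wt T(\mathbf{x}) = C\mathbf{x} + \mathbf{c}$ a.e.\ $\mathbf{x}$ becomes $T(\mathbf{y}) = C\Sigma^{-1/2}\mathbf{y} + \mathbf{c}$ a.e.\ $\mathbf{y}$; since $C\Sigma^{-1/2} \in GL(n)$, this exhibits $T$ as an affine automorphism. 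I do not expect any genuine obstacle here: the only points needing (minor) care are keeping the push-forward convention straight in the computation of $N(\wt\btheta_j, I)\wt T^{-1}$, and observing that precomposition with the invertible linear map $\Sigma^{1/2}$ respects "almost everywhere" statements, so that the conclusion of Theorem \ref{thm3.1} for $\wt T$ indeed descends to $T$.
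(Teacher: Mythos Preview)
Your proof is correct and follows essentially the same approach as the paper: the paper defines $T'(\mathbf{x}) = T(\Sigma^{1/2}\mathbf{x})$ (your $\wt T$), observes that $N(\Sigma^{-1/2}\btheta_j, I)T'^{-1} = N(\bvarphi_j, \Psi_j)$ with $\{\Sigma^{-1/2}(\btheta_j - \btheta_0)\}$ a basis, applies Theorem~\ref{thm3.1}, and concludes. You have simply supplied more detail in the push-forward computation and in passing the affine conclusion from $\wt T$ back to $T$.
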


\vskip0.1in
\begin{proof}
Define $T^{\prime} (\mathbf{x}) = T (\Sigma^{1/2} \mathbf{x}).$ Then
$T^{\prime}$ is a Borel automorphism for which
$$N (\Sigma^{-1/2} \btheta_j, I) T^{\prime^{-1}} =
N(\bvarphi_j, \Psi_j), \, 0 \leq j \leq n. $$
Since $\{\Sigma^{-1/2} (\btheta_j - \btheta_0), 1
\leq j \leq n\}$ is a basis for $\mathbb{R}^n$ it follows that
$T^{\prime}$ and therefore $T$ is an affine automorphism.
\end{proof}

\vskip0.1in
\begin{remark}
 Corollary \ref{cor3.2} is false if $T$ is just a Borel measurable
map. Using a result of Linnik (Theorem 4.3.1 in  \cite{lyv}),
Mase \cite{ms} has shown that given any finite set of normal
distributions $\{N_j, 1 \leq j \leq k \}$ on $\mathbb{R}$ there
exists a Borel map $T: \mathbb{R} \rightarrow \mathbb{R}$ such that
$N_j T^{-1}$ is the probability measure of the standard normal
distribution on $\mathbb{R}.$
\end{remark}

\section{The second characterization theorem}
 \setcounter{equation}{0}

Here we consider the case of a Borel automorphism $T$ of
$\mathbb{R}^n$ such that $N(\btheta, \Sigma)T^{-1}$ is a
gaussian probability measure for a fixed $\btheta$ in
$\mathbb{R}^n$ but $\Sigma$ varying in a set of $n + 1$ covariance
matrices with $n$ of them being rank one perturbations of the
remaining one. Without loss of generality one may assume the fixed
mean vector $\btheta$ to be $\mathbf{0}.$ We begin with a
lemma.
\vskip0.1in
\begin{lemma}\label{lem4.1} 
Let $T$ be a Borel automorphism of $\mathbb{R}^n$ such that
\begin{eqnarray*}
N(\mathbf{0}, I) T^{-1} &=& N (\mathbf{0}, I), \\
N(\mathbf{0}, \Sigma) T^{-1} &=& N (\mathbf{\eta}, \Psi)
\end{eqnarray*}
where $\Sigma = I + \varepsilon \mathbf{u}, \mathbf{u}^{\prime},$
$\mathbf{u}$ is a unit vector in $\mathbb{R}$ and $\varepsilon$ is a
real nonzero scalar such that $\varepsilon > -1.$ Then $\mathbf{\eta}
= \mathbf{0}$ and $\Psi = I + \varepsilon \mathbf{v}
\mathbf{v}^{\prime}$ for some unit vector $\mathbf{v}.$ Furthermore
\begin{equation}
(\mathbf{v}^{\prime} T(\mathbf{x}))^2 =  (\mathbf{u}^{\prime}
\mathbf{x})^2 \quad \mbox{a.e.} \,\, \mathbf{x}.\label{eq4.1}
\end{equation}
\end{lemma}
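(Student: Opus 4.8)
The plan is to read off everything from Proposition \ref{prop2.1} and Corollary \ref{cor2.2} applied to the pair of laws $N(\mathbf{0},I),\,N(\mathbf{0},\Sigma)$ and their $T^{-1}$-images $N(\mathbf{0},I),\,N(\mathbf{\eta},\Psi)$. So in the notation of \eqref{eq2.1} I take $\btheta_1=\btheta_2=\mathbf{0}$, $\Sigma_1=I$, $\Sigma_2=\Sigma$, $\bvarphi_1=\mathbf{0}$, $\bvarphi_2=\mathbf{\eta}$, $\Psi_1=I$, $\Psi_2=\Psi$; here $\Sigma\in\mathcal{S}_{+}(n)$ because its eigenvalues are $1+\varepsilon$ and $1$, both positive since $\varepsilon>-1$, and $\Psi\in\mathcal{S}_{+}(n)$ because $N(\mathbf{\eta},\Psi)$ is a nonsingular gaussian law. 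Since the two mean vectors on the left coincide, part (ii) of Corollary \ref{cor2.2} gives at once $\bvarphi_1=\bvarphi_2$, i.e. $\mathbf{\eta}=\mathbf{0}$.

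Next I would determine the shape of $\Psi$. By part (1) of Proposition \ref{prop2.1} (with $\Psi_1=\Sigma_1=I$) the matrices $\Psi^{-1}$ and $\Sigma^{-1}$ have the same characteristic polynomial, hence $\Psi$ and $\Sigma$ have the same eigenvalues with the same multiplicities. Now $\Sigma=I+\varepsilon\mathbf{u}\mathbf{u}^{\prime}$ has eigenvalue $1+\varepsilon$ with multiplicity one (eigenvector $\mathbf{u}$) and eigenvalue $1$ with multiplicity $n-1$; since $\varepsilon\neq0$ the same holds for $\Psi$, and diagonalizing the symmetric matrix $\Psi$ in an orthonormal eigenbasis gives $\Psi=I+\varepsilon\mathbf{v}\mathbf{v}^{\prime}$, where $\mathbf{v}$ is a unit eigenvector of $\Psi$ for the eigenvalue $1+\varepsilon$ (determined up to sign, which will not matter since $\mathbf{v}$ enters only through $\mathbf{v}\mathbf{v}^{\prime}$ and through $(\mathbf{v}^{\prime}T(\mathbf{x}))^2$).

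Finally I would feed these data into part (4) of Proposition \ref{prop2.1}. With $\bvarphi_1=\bvarphi_2=\mathbf{0}$, $\btheta_1=\btheta_2=\mathbf{0}$, $\Psi_1=\Sigma_1=I$ it reads $T(\mathbf{x})^{\prime}\Psi^{-1}T(\mathbf{x})-T(\mathbf{x})^{\prime}T(\mathbf{x})=\mathbf{x}^{\prime}\Sigma^{-1}\mathbf{x}-\mathbf{x}^{\prime}\mathbf{x}$ a.e. $\mathbf{x}$. Since $\mathbf{u}$ and $\mathbf{v}$ are unit vectors, the Sherman--Morrison identity gives $\Sigma^{-1}=I-\frac{\varepsilon}{1+\varepsilon}\mathbf{u}\mathbf{u}^{\prime}$ and $\Psi^{-1}=I-\frac{\varepsilon}{1+\varepsilon}\mathbf{v}\mathbf{v}^{\prime}$, so using $\mathbf{x}^{\prime}\mathbf{u}\mathbf{u}^{\prime}\mathbf{x}=(\mathbf{u}^{\prime}\mathbf{x})^2$ and likewise for $\mathbf{v}$ the identity collapses to $-\frac{\varepsilon}{1+\varepsilon}(\mathbf{v}^{\prime}T(\mathbf{x}))^2=-\frac{\varepsilon}{1+\varepsilon}(\mathbf{u}^{\prime}\mathbf{x})^2$ a.e. Because $\varepsilon>-1$ and $\varepsilon\neq0$, the scalar $\varepsilon/(1+\varepsilon)$ is nonzero, and cancelling it yields \eqref{eq4.1}.

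In terms of difficulty, there is essentially no obstacle here: all the analytic work (passing from the density identity to properties (1)--(4)) is already done in Proposition \ref{prop2.1}, so what remains is bookkeeping. The only points that need a word of care are recording why $\Psi$ is positive definite before quoting Proposition \ref{prop2.1}, noting that the sign ambiguity in $\mathbf{v}$ is harmless, and checking that $\varepsilon/(1+\varepsilon)\neq0$ before dividing in the last step.
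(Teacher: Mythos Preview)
Your proof is correct and follows exactly the same route as the paper: apply Corollary~\ref{cor2.2}(ii) to get $\eta=\mathbf{0}$, use Proposition~\ref{prop2.1}(1) to match the spectra of $\Sigma$ and $\Psi$ and hence write $\Psi=I+\varepsilon\mathbf{v}\mathbf{v}^{\prime}$, and then read off \eqref{eq4.1} from Proposition~\ref{prop2.1}(4) via the rank-one inverse formula. The extra remarks you add (positive-definiteness of $\Psi$, harmless sign ambiguity in $\mathbf{v}$, nonvanishing of $\varepsilon/(1+\varepsilon)$) only make the argument more explicit.
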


\vskip0.1in
\begin{proof}
By property (ii) of Corollary \ref{cor2.2}, $\mathbf{\eta} =
\mathbf{0}.$ By property (1) of Proposition \ref{prop2.1}, $\Sigma $
and $\Psi$ have the same characteristic polynomial and therefore
$\Psi = I + \varepsilon \mathbf{v} \mathbf{v}^{\prime}$ for some unit
vector  $\mathbf{v}.$ By property (4) of Proposition \ref{prop2.1} we
have
\begin{eqnarray}
\lefteqn{ T(\mathbf{x})^{\prime} (I + \varepsilon \mathbf{v}  
\mathbf{v}^{\prime})^{-1} T ( \mathbf{x}  ) - T ( \mathbf{x} 
)^{\prime}  T ( \mathbf{x}  )  } \nonumber \\ 
&=& \mathbf{x}^{\prime} (I + \varepsilon \mathbf{u}
\mathbf{u}^{\prime} )^{-1} \mathbf{x} - \mathbf{x}^{\prime} \mathbf{x}
\,\,\, \mbox{a.e.} \,\, \mathbf{x}. \label{eq4.2}
\end{eqnarray}
Since, for any unit vector $\mathbf{w},$ $(I + \varepsilon \mathbf{w}
\mathbf{w}^{\prime} )^{-1} = I - \frac{\varepsilon}{1+\varepsilon}
\mathbf{w} \mathbf{w}^{\prime},$ \eqref{eq4.1} follows from
\eqref{eq4.2}.
\end{proof}
\vskip0.1in
\begin{lemma}\label{lem4.2}
Let $T$ be a Borel automorphism of $\mathbb{R}^n$ such that
\begin{enumerate}
 \item[(i)] $N (\mathbf{0}, I)T^{-1} = N(\mathbf{0}, I),$
\vskip0.1in
\item[(ii)] $N(\mathbf{0}, \Sigma_j) T^{-1} = N(\bvarphi_j,
\Psi_j),$ $1 \leq j \leq n$
\end{enumerate}
where $\Sigma_j = I + \varepsilon_j \mathbf{u}_j
\mathbf{u}_j^{\prime},
$ $ 1 \leq j \leq n,$ $ 0 \neq \varepsilon_j > -1$ $\forall$ $j$ and
$\{\mathbf{u}_j, 1 \leq j \leq n \}$ is a basis of unit vectors in
$\mathbb{R}^n.$ Then there exist $A,B$ in $GL(n)$ and a Lebesgue
partition $\{E_{\mathbf{s}}, \mathbf{s} \in D_n\}$ of $\mathbb{R}^n$
over $D_n$ such that
\begin{enumerate}
 \item[(i)] $\{B \mathbf{s} A E_{\mathbf{s}}, \mathbf{s} \in D_n \}$
is a Lebesgue partition over $D_n,$
\vskip0.1in
\item[(ii)] $T (\mathbf{x}) = \Sigma_{\mathbf{s} \in D_{n}}
1_{E_{\mathbf{s}}} (\mathbf{x}) B \,\,\mathbf{s}\,\, A \mathbf{x}$
a.e.
$\mathbf{x},$
\vskip0.1in
\item[(iii)] $T^{-1} (\mathbf{x}) = \Sigma_{\mathbf{s} \in
D_{n}} 1_{B \mathbf{s} A (E_{\mathbf{s}})} (\mathbf{x}) A^{-1}
\mathbf{s} B^{-1} \mathbf{x}$ a.e. $\mathbf{x}.$
\end{enumerate}
\end{lemma}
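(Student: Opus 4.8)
The plan is to build the orthogonal matrices one $\mathbf{u}_j$ at a time, using Lemma \ref{lem4.1} applied to each pair $\bigl(N(\mathbf{0},I),N(\mathbf{0},\Sigma_j)\bigr)$ and then gluing the resulting information together. From Lemma \ref{lem4.1} we know that for each $j$ there is a unit vector $\mathbf{v}_j$ with $\Psi_j=I+\varepsilon_j\mathbf{v}_j\mathbf{v}_j^{\prime}$, that $\bvarphi_j=\mathbf{0}$, and that $(\mathbf{v}_j^{\prime}T(\mathbf{x}))^2=(\mathbf{u}_j^{\prime}\mathbf{x})^2$ a.e. Since $T$ also preserves $N(\mathbf{0},I)$, a standard fact (the Cramér–type rigidity that underlies \cite{nskt}) is that a gaussian-preserving Borel automorphism is, on a set of full measure, an orthogonal linear map composed measurably with sign changes; more precisely one shows first that $\|T(\mathbf{x})\|=\|\mathbf{x}\|$ a.e. and then that the coordinates of $T(\mathbf{x})$ with respect to the basis dual to $\{\mathbf{u}_j\}$ are, up to sign, the coordinates of $\mathbf{x}$. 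So the first real step is to extract from the $n$ scalar identities $(\mathbf{v}_j^{\prime}T(\mathbf{x}))^2=(\mathbf{u}_j^{\prime}\mathbf{x})^2$ a linear structure for $T$.

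Concretely, I would first argue that $\{\mathbf{v}_j,1\le j\le n\}$ is again a basis: if it were not, there would be a nonzero linear relation $\sum c_j\mathbf{v}_j=0$, and combined with the identities $(\mathbf{v}_j^{\prime}T(\mathbf{x}))^2=(\mathbf{u}_j^{\prime}\mathbf{x})^2$ this would force a nontrivial algebraic relation among the independent linear forms $\mathbf{u}_j^{\prime}\mathbf{x}$ holding a.e., a contradiction. Let $A$ be the matrix whose rows are $\mathbf{u}_j^{\prime}$ (normalized so that a convenient $B$ exists) and let $B^{-1}$ have rows $\mathbf{v}_j^{\prime}$, both in $GL(n)$. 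The $n$ identities say that the $j$-th coordinate of $B^{-1}T(\mathbf{x})$ equals $\pm$ the $j$-th coordinate of $A\mathbf{x}$ a.e. For each $j$ let $S_j^{+}=\{\mathbf{x}:\mathbf{v}_j^{\prime}T(\mathbf{x})=\mathbf{u}_j^{\prime}\mathbf{x}\}$ and $S_j^{-}$ the complementary set where it equals $-\mathbf{u}_j^{\prime}\mathbf{x}$; these are Borel up to null sets. For $\mathbf{s}=\operatorname{diag}(s_1,\dots,s_n)\in D_n$ put $E_{\mathbf{s}}=\bigcap_j S_j^{s_j}$. Then on $E_{\mathbf{s}}$ we have $B^{-1}T(\mathbf{x})=\mathbf{s}A\mathbf{x}$, i.e. $T(\mathbf{x})=B\mathbf{s}A\mathbf{x}$, which gives (ii), and the $\{E_{\mathbf{s}}\}$ cover $\mathbb{R}^n$ up to a null set and are essentially disjoint because the sign of $\mathbf{u}_j^{\prime}\mathbf{x}$ pins down which of $S_j^{\pm}$ contains $\mathbf{x}$ off a null set — so $\{E_{\mathbf{s}}\}$ is a Lebesgue partition over $D_n$.

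For (i) and (iii): since $T$ is a Borel automorphism with $LT^{-1}\sim L$, applying $T$ to the partition $\{E_{\mathbf{s}}\}$ gives sets $F_{\mathbf{s}}:=B\mathbf{s}A(E_{\mathbf{s}})$ whose union is $T(\mathbb{R}^n)=\mathbb{R}^n$ up to a null set; their essential disjointness follows because $T$ is injective and carries the essentially disjoint $E_{\mathbf{s}}$ to sets that can only overlap inside $T$ of a null set, which is null since $LT^{-1}\sim L$. Hence $\{F_{\mathbf{s}}\}$ is a Lebesgue partition, giving (i). Finally (iii) is just the inversion of (ii): on $F_{\mathbf{s}}$, writing $\mathbf{x}=B\mathbf{s}A\mathbf{y}$ with $\mathbf{y}\in E_{\mathbf{s}}$, we get $T^{-1}(\mathbf{x})=\mathbf{y}=A^{-1}\mathbf{s}^{-1}B^{-1}\mathbf{x}=A^{-1}\mathbf{s}B^{-1}\mathbf{x}$ since $\mathbf{s}^{2}=I$, which is exactly the claimed formula.

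The main obstacle I anticipate is the measure-theoretic bookkeeping needed to pass cleanly from the a.e. scalar identities of Lemma \ref{lem4.1} to a genuine Lebesgue partition: one must be careful that the exceptional null sets for the $n$ separate identities, and for the sign-determination of each $\mathbf{u}_j^{\prime}\mathbf{x}$, can be absorbed into a single null set, and that pushing forward by $T$ (and pulling back by $T^{-1}$) preserves nullity in both directions. The algebraic core — that $\{\mathbf{v}_j\}$ is a basis and that $T$ is piecewise linear with pieces $B\mathbf{s}A$ — is essentially forced once the bookkeeping is in place.
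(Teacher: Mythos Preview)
Your argument follows the same route as the paper's: invoke Lemma~\ref{lem4.1} for each $j$, let $A$ and $C=B^{-1}$ be the matrices with rows $\mathbf{u}_j^{\prime}$ and $\mathbf{v}_j^{\prime}$, partition $\mathbb{R}^n$ according to the $2^n$ sign patterns in $\mathbf{v}_j^{\prime}T(\mathbf{x})=\pm\mathbf{u}_j^{\prime}\mathbf{x}$, and invert. Two corrections are in order, though neither is fatal.

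First, the aside that ``a gaussian-preserving Borel automorphism is, on a set of full measure, an orthogonal linear map composed measurably with sign changes'' and that $\|T(\mathbf{x})\|=\|\mathbf{x}\|$ a.e.\ is \emph{not} a standard fact: it is false for a general $N(\mathbf{0},I)$-preserving Borel automorphism (one may, for instance, swap two regions of equal gaussian mass sitting at different distances from the origin), and in the present context it is essentially the conclusion of this lemma together with Lemma~\ref{lem4.3}, obtained only \emph{because} of the extra hypotheses on the $\Sigma_j$. You never actually use this claim, so the proof survives, but the paragraph should be removed.

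Second, your justification that $\{\mathbf{v}_j\}$ is a basis is vaguer than necessary. The paper's argument is cleaner and avoids the issue of $\mathbf{x}$-dependent signs: after defining the $E_{\mathbf{s}}$, pick one with $L(E_{\mathbf{s}})>0$; since $A\in GL(n)$ and a set of positive Lebesgue measure spans $\mathbb{R}^n$, the set $\{\mathbf{s}A\mathbf{x}:\mathbf{x}\in E_{\mathbf{s}}\}$ spans, and from $CT(\mathbf{x})=\mathbf{s}A\mathbf{x}$ on $E_{\mathbf{s}}$ one reads off that $C$ is surjective, hence invertible. Finally, your disjointness sentence is slightly misphrased: the sign of $\mathbf{u}_j^{\prime}\mathbf{x}$ does not tell you which of $S_j^{\pm}$ contains $\mathbf{x}$; what is true (and sufficient) is that $\mathbf{u}_j^{\prime}\mathbf{x}\neq 0$ forces $\mathbf{x}$ to lie in at most one of them, which is also how the paper argues (phrased as $A\mathbf{x}$ lying in the proper $1$-eigenspace of $\mathbf{s}\mathbf{t}$ when $\mathbf{x}\in E_{\mathbf{s}}\cap E_{\mathbf{t}}$).
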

\vskip0.1in
\begin{proof}
 From Lemma \ref{lem4.1} we have $\bvarphi_j = \mathbf{0}$
$\forall$ $j$ and
$$\Psi_j = I + \varepsilon_j \mathbf{v}_j \mathbf{v}_j^{\prime},
\quad 1 \leq j \leq n$$
where each $\mathbf{v}_j$ is a unit vector. Furthermore
$$(\mathbf{v}_j^{\prime} T(\mathbf{x}))^2 = (\mathbf{u}_j^{\prime}
\mathbf{x})^2 \quad \mbox{a.e.} \,\,\mathbf{x},\,\, 1 \leq j \leq n.
$$
Denoting $\mathbf{s} = \diag (s_1, s_2, \ldots, s_n)$ with $s_j = \pm
1$ and `$\diag$' indicating diagonal matrix define the Borel sets
$$E_{\mathbf{s}} = \left \{\mathbf{x} \left | \mathbf{v}_j^{\prime}
T(\mathbf{x}) = s_j \mathbf{u}_j^{\prime} \mathbf{x} \quad \forall
\,\, j = 1,2,\ldots,n \right . \right \}, \quad \mathbf{s} \in D_n. $$
Then it follows that for the Lebesgue measure $L,$
$$L \left (\mathbb{R}^n \backslash \underset{\mathbf{s} \in
D_{n}}{\cup} E_{\mathbf{s}} \right ) = 0. $$
Write
$$A = \left [\begin{array}{c}\mathbf{u}_1^{\prime} \\
\mathbf{u}_2^{\prime} \\ \vdots \\ \mathbf{u}_n^{\prime} \end{array}
\right ], \quad C = \left [\begin{array}{c}\mathbf{v}_1^{\prime} \\
\mathbf{v}_2^{\prime} \\ \vdots \\ \mathbf{v}_n^{\prime} \end{array}
\right ].$$
Then
$$C T(\mathbf{x}) = \mathbf{s} A \mathbf{x} \quad \forall \,\,
\mathbf{x} \in E_{\mathbf{s}}. $$
Choose $\mathbf{s} \in D_n$ such that $L (E_{\mathbf{s}}) > 0.$ Then
we know that $\{\mathbf{s} A \mathbf{x}, \mathbf{x} \in E_{\mathbf{s}}
\}$ spans $\mathbb{R}^n$ and therefore $\{C T(\mathbf{x}), \mathbf{x}
\in E_{\mathbf{s}}\}$ spans $\mathbb{R}^n.$ Thus $C \in GL(n).$
Writing $B=C^{-1}$ we have
$$T(\mathbf{x}) B \mathbf{s} A \mathbf{x} \quad \forall
\,\,\mathbf{x} \in E_{\mathbf{s}}.$$
Let now $\mathbf{s},$ $\mathbf{t} \in D_n,$ $\mathbf{s} \neq t.$ If
$\mathbf{x} \in E_{\mathbf{s}} \cap E_{\mathbf{t}} $ we have
$\mathbf{s} A \mathbf{x} = \mathbf{t} A \mathbf{x}.$ In other words,
$A\mathbf{x}$ is an eigenvector of $\mathbf{s} \mathbf{t}$ and
$\mathbf{s} \mathbf{t} \neq I.$ Hence $L (E_{\mathbf{s}} \cap
E_{\mathbf{t}}) = 0.$ Thus $\{E_{\mathbf{s}}, \mathbf{s} \in D_n \}$
is a Lebesgue partition over $D_n$ and
$$T(\mathbf{x}) = \Sigma_{\mathbf{s}} 1_{E_{\mathbf{s}}} (\mathbf{x}) B
\mathbf{s} A \mathbf{x} \quad \mbox{a.e.} \,\, \mathbf{x}.$$
Putting $T(\mathbf{x}) = \mathbf{y}$ and solving for $\mathbf{x}$
from this equation we get
$$T^{-1} (\mathbf{y}) = \Sigma_{\mathbf{s}} 1_{B \mathbf{s} A
(E_{\mathbf{s}})} (\mathbf{y}) A^{-1} \mathbf{s} B^{-1} \mathbf{y}
\quad \mbox{a.e.} \,\,\mathbf{y} $$
where $\{B \mathbf{s} A (E_{\mathbf{s}}), s \in D_n \}$ is also a
Lebesgue partition. This proves all the required properties (i)-(iii)
of the lemma.
\end{proof}
\vskip0.1in
\begin{lemma}\label{lem4.3}
 In Lemma 4.2, the matrices $A$ and $B$ can be chosen to be
orthogonal.
\end{lemma}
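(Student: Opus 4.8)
The idea is to exploit the hypothesis $N(\mathbf{0},I)T^{-1}=N(\mathbf{0},I)$ more fully than was needed in Lemma \ref{lem4.2}: it will force each of the finitely many linear pieces of $T$ to be orthogonal, after which only linear algebra and bookkeeping remain. Write $M_{\mathbf{s}}=B\mathbf{s}A$ for $\mathbf{s}\in D_n$, let $\phi$ denote the density of $N(\mathbf{0},I)$, and set $S=\{\mathbf{s}\in D_n:L(E_{\mathbf{s}})>0\}$, which is nonempty since $\{E_{\mathbf{s}}\}$ is a Lebesgue partition of $\mathbb{R}^n$. By Lemma \ref{lem4.2}, $T(\mathbf{x})=M_{\mathbf{s}}\mathbf{x}$ for a.e.\ $\mathbf{x}\in E_{\mathbf{s}}$ and $\{M_{\mathbf{s}}(E_{\mathbf{s}}),\mathbf{s}\in D_n\}$ is a Lebesgue partition. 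First I would dissect $T^{-1}F$ along the partition and change variables linearly on each piece: for every Borel set $F$,
$$\int_F\phi(\mathbf{y})\,d\mathbf{y}=N(\mathbf{0},I)(T^{-1}F)=\sum_{\mathbf{s}\in S}\int_{F\cap M_{\mathbf{s}}(E_{\mathbf{s}})}\phi(M_{\mathbf{s}}^{-1}\mathbf{y})\,|\det M_{\mathbf{s}}|^{-1}\,d\mathbf{y}.$$
As $\{M_{\mathbf{s}}(E_{\mathbf{s}}),\mathbf{s}\in S\}$ covers $\mathbb{R}^n$ up to null sets, this forces $\phi(\mathbf{y})=\phi(M_{\mathbf{s}}^{-1}\mathbf{y})\,|\det M_{\mathbf{s}}|^{-1}$ for a.e.\ $\mathbf{y}\in M_{\mathbf{s}}(E_{\mathbf{s}})$, each $\mathbf{s}\in S$. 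Taking logarithms, the quadratic form $\mathbf{y}'((M_{\mathbf{s}}^{-1})'M_{\mathbf{s}}^{-1}-I)\mathbf{y}$ equals the constant $-2\log|\det M_{\mathbf{s}}|$ a.e.\ on the positive-measure set $M_{\mathbf{s}}(E_{\mathbf{s}})$; a polynomial that is constant on a set of positive Lebesgue measure is constant everywhere, and a constant quadratic form has vanishing matrix, so $(M_{\mathbf{s}}^{-1})'M_{\mathbf{s}}^{-1}=I$, i.e.\ $M_{\mathbf{s}}$ is orthogonal, for every $\mathbf{s}\in S$.

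Next I would fix $\mathbf{s}_0\in S$. For $\mathbf{s}\in S$ the matrix $M_{\mathbf{s}_0}^{-1}M_{\mathbf{s}}=A^{-1}(\mathbf{s}_0\mathbf{s})A$ is orthogonal, being a product of orthogonal matrices, and coincides with its own inverse since $(\mathbf{s}_0\mathbf{s})^2=I$; hence it is symmetric. The family $\{A^{-1}(\mathbf{s}_0\mathbf{s})A:\mathbf{s}\in S\}$ is commuting, being the image under $X\mapsto A^{-1}XA$ of a set of commuting diagonal matrices, so there is an orthogonal $P$ with $P'A^{-1}(\mathbf{s}_0\mathbf{s})AP=D_{\mathbf{s}}$ diagonal for every $\mathbf{s}\in S$; since $D_{\mathbf{s}}^2=I$ we have $D_{\mathbf{s}}\in D_n$, and $\mathbf{s}\mapsto D_{\mathbf{s}}$ is injective on $S$ because $\mathbf{s}\mapsto\mathbf{s}_0\mathbf{s}$ is. Put $A_0=P'$ and $B_0=M_{\mathbf{s}_0}P$, both orthogonal. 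Then for $\mathbf{s}\in S$,
$$M_{\mathbf{s}}=M_{\mathbf{s}_0}\bigl(A^{-1}(\mathbf{s}_0\mathbf{s})A\bigr)=M_{\mathbf{s}_0}P\,D_{\mathbf{s}}\,P'=B_0\,D_{\mathbf{s}}\,A_0.$$
Finally I would relabel: set $F_{\mathbf{t}}=E_{\mathbf{s}}$ whenever $\mathbf{t}=D_{\mathbf{s}}$ for some $\mathbf{s}\in S$, and let $F_{\mathbf{t}}$ be a Lebesgue-null Borel set otherwise. Then $\{F_{\mathbf{t}},\mathbf{t}\in D_n\}$ and $\{B_0\mathbf{t}A_0(F_{\mathbf{t}}),\mathbf{t}\in D_n\}$ are Lebesgue partitions over $D_n$, being, up to null sets, relabelings of $\{E_{\mathbf{s}}\}$ and of $\{M_{\mathbf{s}}(E_{\mathbf{s}})\}$, and
$$T(\mathbf{x})=\sum_{\mathbf{s}\in S}1_{E_{\mathbf{s}}}(\mathbf{x})\,M_{\mathbf{s}}\mathbf{x}=\sum_{\mathbf{t}\in D_n}1_{F_{\mathbf{t}}}(\mathbf{x})\,B_0\,\mathbf{t}\,A_0\,\mathbf{x}\quad\mbox{a.e.}\,\,\mathbf{x},$$
with $T^{-1}$ given by the corresponding formula $T^{-1}(\mathbf{x})=\sum_{\mathbf{t}\in D_n}1_{B_0\mathbf{t}A_0(F_{\mathbf{t}})}(\mathbf{x})\,A_0^{-1}\,\mathbf{t}\,B_0^{-1}\,\mathbf{x}$ a.e.; this is exactly the conclusion of Lemma \ref{lem4.2} with $A,B$ replaced by the orthogonal matrices $A_0,B_0$.

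The step I expect to be the main obstacle is the first one, passing from ``$T$ preserves $N(\mathbf{0},I)$'' to orthogonality of each piece $M_{\mathbf{s}}$: since $T$ is only piecewise linear one cannot transform covariance matrices directly, so one must cut $\mathbb{R}^n$ along the partition, change variables piece by piece, use that the images $M_{\mathbf{s}}(E_{\mathbf{s}})$ again tile $\mathbb{R}^n$ in order to compare densities, and invoke the rigidity fact that a nonzero polynomial cannot vanish on a set of positive Lebesgue measure. Everything afterwards---simultaneous orthogonal diagonalization of a commuting family of symmetric matrices, and relabeling the finite index set $D_n$---is routine and should present no difficulty.
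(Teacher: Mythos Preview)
Your argument is correct. The first half---cutting along the Lebesgue partition, changing variables on each piece, and using that a nonzero polynomial cannot vanish on a set of positive Lebesgue measure to force each $M_{\mathbf{s}}=B\mathbf{s}A$ with $L(E_{\mathbf{s}})>0$ to be orthogonal---is essentially the paper's own argument, except that the paper phrases it via Laplace transforms (integrating $e^{\lambda'\mathbf{x}}$ against the Gaussian density) rather than via direct density comparison; the content is the same.

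The second half is where you diverge. To extract orthogonal replacements for $A$ and $B$, you fix $\mathbf{s}_0$, observe that the orthogonal involutions $M_{\mathbf{s}_0}^{-1}M_{\mathbf{s}}=A^{-1}(\mathbf{s}_0\mathbf{s})A$ form a commuting family of symmetric matrices, simultaneously diagonalize them, and then \emph{relabel} the partition by the resulting $D_{\mathbf{s}}\in D_n$. The paper instead takes polar decompositions $B=VK$, $A=HU$ with $U,V$ orthogonal and $K,H$ positive definite: orthogonality of $B\mathbf{s}A$ gives $K\mathbf{s}H^2\mathbf{s}K=I$, hence $\mathbf{s}H\mathbf{s}=K^{-1}$ by uniqueness of positive square roots, and therefore $B\mathbf{s}A=V\mathbf{s}U$ for every $\mathbf{s}$ with $L(E_{\mathbf{s}})>0$, with the \emph{same} index $\mathbf{s}$. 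This buys the paper two things: no relabeling is needed, and the computation simultaneously yields $V\mathbf{s}U\,\Sigma_j\,(V\mathbf{s}U)'=\Psi_j$ for all such $\mathbf{s}$, a fact used directly in Theorem~\ref{thm4.4}. Your route is a bit longer but entirely valid for the statement of the lemma as written.
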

\vskip0.1in
\begin{proof}
Following the notations of Lemma \ref{lem4.2} and its proof denote
$F_{\mathbf{s}} = B \mathbf{s} A (E_{\mathbf{s}})$ and by $(\Sigma,
\Psi)$ any of the pairs $(I,I),$ $(\Sigma_j, \Psi_j),$ $1 \leq j \leq
n.$ Then we have $|\Sigma| = |\Psi|$ and $N (\mathbf{0}, \Sigma)T^{-1} =
N(\mathbf{0}, \Psi).$ Thus, for any $\lambda \in \mathbb{R}^n$ we have
\begin{eqnarray*}
\lefteqn{\int e^{\mathbf{\lambda}^{\prime} \mathbf{x} - \frac{1}{2}
\mathbf{x}^{\prime} \Psi^{\prime} \mathbf{x} } d\mathbf{x} } \\
&=& \int e^{\mathbf{\lambda}^{\prime} T(\mathbf{x})  - \frac{1}{2}
\mathbf{x}^{\prime} \Sigma^{-1} \mathbf{x}   } d \mathbf{x} \\ 
&=& \Sigma_{\mathbf{s}} \int_{E_{\mathbf{s}}}
e^{\mathbf{\lambda}^{\prime} B \mathbf{s} A \mathbf{x} - \frac{1}{2}
\mathbf{x}^{\prime} \Sigma^{-1} \mathbf{x} }\\
&=& |\,\,|AB|^{-1}\,\,| \Sigma_{\mathbf{s}} \int_{F_{\mathbf{s}}}
e^{\mathbf{\lambda}^{\prime} \mathbf{x} - \frac{1}{2}
\mathbf{x}^{\prime} (A^{-1} \mathbf{s} B^{-1})^{\prime} \Sigma^{-1}
A^{-1} \mathbf{s} B^{-1} \mathbf{x}  } d \mathbf{x}.
\end{eqnarray*}
By the uniqueness of Laplace transforms we conclude that
$$e^{-\frac{1}{2} \mathbf{x}^{\prime} \Psi^{-1} \mathbf{x}} =
|\,\,|AB|^{-1}\,\,|  \Sigma_{\mathbf{s} \in D_n}
1_{F_{\mathbf{s}}}(\mathbf{x})
e^{-\frac{1}{2} \mathbf{x}^{\prime} (A^{-1} \mathbf{s}
B^{-1})^{\prime} \Sigma^{-1} A^{-1} \mathbf{s} B^{-1} \mathbf{x}    }
\quad \mbox{a.e.}\,\, \mathbf{x}.$$
Suppose $L(E_{\mathbf{s}}) > 0.$ Then $L(F_{\mathbf{s}}) > 0.$ Since
$\{F_t\}$ is a Lebesgue partition we have
$$\log |\,|AB|\,| = \frac{1}{2} \mathbf{x}^{\prime} \left
(\Psi^{-1} -
(A^{-1} \mathbf{s} B^{-1})^{\prime} \Sigma^{-1} A^{-1} \mathbf{s}
B^{-1} \right ) \mathbf{x} \quad \mbox{a.e.} \,\, \mathbf{x}
\in F_{\mathbf{s}}. $$
In other words the quadratic form on the right hand side is a
constant on a set of positive Lebesgue measure. A simple argument
based on Fubini's theorem implies that $|\,|AB|\,| = 1$ and
$$B \mathbf{s} A \Sigma (B \mathbf{s} A)^{\prime} = \Psi \quad
\mbox{whenever} \quad L(E_{\mathbf{s}}) > 0. $$
Choosing $(\Sigma, \,\,\Psi) = (I,I)$ we conclude that $B \mathbf{s}
A$ is
orthogonal. Let $B=VK,$ $A=HU$ where $U,V$ are orthogonal and $K,H$
are positive definite. Then $K \mathbf{s} H^2 \mathbf{s} K = I$ and
therefore $K = \mathbf{s} H^{-1} \mathbf{s}$ and $B \mathbf{s} A = V
\mathbf{s} U.$ This shows that
$$V \mathbf{s} U \Sigma (V \mathbf{s} U)^{\prime} = \Psi \quad
\mbox{whenever}\quad L(E_{\mathbf{s}})>0 $$
and Lemma \ref{lem4.2} holds with $A,B$ replaced by $U,V$
respectively.
\end{proof}
\vskip0.1in
\begin{theorem}\label{thm4.4}
Let $T$ be a Borel automorphism of $\mathbb{R}^n$ and let $\Sigma_j = I
+ \varepsilon_j \mathbf{u}_j \mathbf{u}_j^{\prime},$ $1 \leq j \leq
n$ where $\varepsilon_j > -1$ for every $j$ and $\{ \mathbf{u}_j, 1
\leq j \leq n\}$ is a basis of unit vectors in $\mathbb{R}^n.$ Then
$N(\mathbf{0}, I)T^{-1} = N(\mathbf{0},I)$ and $N(\mathbf{0},
\Sigma_j)T^{-1}$ is a gaussian probability measure for every $j$ if and
only if there exist $n \times n$ orthogonal matrices $U,V$ and a
Lebesgue partition $\{E_{\mathbf{s}}, \mathbf{s} \in D_n  \}$ of
$\mathbb{R}^n$ such that the following hold:
\begin{enumerate}
 \item[(i)] $T(\mathbf{x}) = \Sigma_{\mathbf{s} \in D_{n}}
1_{E_{\mathbf{s}}} (\mathbf{x}) V \mathbf{s} U \mathbf{x}$ a.e.
$\mathbf{x};$
\vskip0.1in
  \item[(ii)] $\{ V \mathbf{s} U (E_{\mathbf{s}}), \mathbf{s} \in
D_n\}$ is also a Lebesgue partition. In such a case there exists a
basis $\{\mathbf{v}_j, 1 \leq j \leq n\}$ of unit vectors in
$\mathbb{R}^n$ such that
\vskip0.1in
 \item[(iii)] $ V \mathbf{s} U \mathbf{u}_j = \mathbf{v}_j $ if $L
(E_{\mathbf{s}}) > 0,$ $\forall$ $1 \leq j \leq n;$
\vskip0.1in
  \item[(iv)] $N(\mathbf{0}, \Sigma_j)T^{-1} = N(\mathbf{0}, \Psi_j)
\quad \forall \,\,j$ where $\Psi_j = I + \varepsilon_j \mathbf{v}_j
\mathbf{v}_j^{\prime},$ $1 \leq j \leq n.$
\end{enumerate}
\end{theorem}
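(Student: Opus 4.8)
The plan is to prove the two implications separately. The forward (``only if'') direction is obtained by stringing together Lemmas~\ref{lem4.1}, \ref{lem4.2} and~\ref{lem4.3}, which already carry all the analytic content; the converse (``if'') direction is a direct change-of-variables verification.

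For necessity, assume $N(\mathbf{0},I)T^{-1}=N(\mathbf{0},I)$ and that $N(\mathbf{0},\Sigma_j)T^{-1}$ is gaussian for every $j$. Applying Lemma~\ref{lem4.1} to each pair $(I,\Sigma_j)$ yields unit vectors $\mathbf{v}_j$ with $N(\mathbf{0},\Sigma_j)T^{-1}=N(\mathbf{0},\Psi_j)$, $\Psi_j=I+\varepsilon_j\mathbf{v}_j\mathbf{v}_j^{\prime}$, and $(\mathbf{v}_j^{\prime}T(\mathbf{x}))^2=(\mathbf{u}_j^{\prime}\mathbf{x})^2$ a.e. Feeding this into Lemma~\ref{lem4.2} produces a Lebesgue partition $\{E_{\mathbf{s}},\mathbf{s}\in D_n\}$ and matrices $A,B\in GL(n)$, with $A$ having rows $\mathbf{u}_j^{\prime}$ and $B^{-1}$ having rows $\mathbf{v}_j^{\prime}$, such that $T(\mathbf{x})=\sum_{\mathbf{s}}1_{E_{\mathbf{s}}}(\mathbf{x})\,B\mathbf{s}A\mathbf{x}$ a.e.\ and $\{B\mathbf{s}A(E_{\mathbf{s}})\}$ is again a Lebesgue partition; Lemma~\ref{lem4.3} then lets us take $A,B$ to be orthogonal matrices $U,V$, with $V\mathbf{s}U\Sigma_j(V\mathbf{s}U)^{\prime}=\Psi_j$ for every $\mathbf{s}$ with $L(E_{\mathbf{s}})>0$. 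This gives (i), (ii) and (iv). For (iii) I evaluate the $j$-th coordinate of the identity $B^{-1}T(\mathbf{x})=\mathbf{s}A\mathbf{x}$, valid on $E_{\mathbf{s}}$, substitute $T(\mathbf{x})=V\mathbf{s}U\mathbf{x}$, and use that $E_{\mathbf{s}}$ spans $\mathbb{R}^n$ when $L(E_{\mathbf{s}})>0$ to obtain $\mathbf{v}_j^{\prime}V\mathbf{s}U=s_j\mathbf{u}_j^{\prime}$; transposing and using the orthogonality of $V\mathbf{s}U$ this reads $V\mathbf{s}U\mathbf{u}_j=s_j\mathbf{v}_j$, which is statement~(iii), the vectors $\mathbf{v}_j$ being determined by $\Psi_j=I+\varepsilon_j\mathbf{v}_j\mathbf{v}_j^{\prime}$ only up to sign. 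Since $V\mathbf{s}U$ is invertible for such $\mathbf{s}$ and $\{\mathbf{u}_j\}$ is a basis, $\{\mathbf{v}_j\}$ is a basis.

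For sufficiency, assume conversely that $T(\mathbf{x})=\sum_{\mathbf{s}}1_{E_{\mathbf{s}}}(\mathbf{x})\,V\mathbf{s}U\mathbf{x}$ a.e.\ with $\{E_{\mathbf{s}}\}$ and $\{V\mathbf{s}U(E_{\mathbf{s}})\}$ Lebesgue partitions over $D_n$ and, for each $j$ (including $\Sigma_0=I$), $V\mathbf{s}U\Sigma_j(V\mathbf{s}U)^{\prime}$ equal to a fixed matrix $\Psi_j$ for all $\mathbf{s}$ with $L(E_{\mathbf{s}})>0$. Since $T$ agrees a.e.\ with this formula and $N(\mathbf{0},\Sigma_j)\ll L$, for a bounded Borel function $g$ I may write $\int g\,d(N(\mathbf{0},\Sigma_j)T^{-1})=\sum_{\mathbf{s}:\,L(E_{\mathbf{s}})>0}\int_{E_{\mathbf{s}}}g(V\mathbf{s}U\mathbf{x})\,p_j(\mathbf{x})\,d\mathbf{x}$, where $p_j$ is the $N(\mathbf{0},\Sigma_j)$-density. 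The substitution $\mathbf{y}=V\mathbf{s}U\mathbf{x}$ has Jacobian $1$, and $|\Psi_j|=|\Sigma_j|$ together with $V\mathbf{s}U\Sigma_j(V\mathbf{s}U)^{\prime}=\Psi_j$ turns $p_j((V\mathbf{s}U)^{-1}\mathbf{y})$ into the $N(\mathbf{0},\Psi_j)$-density; summing over $\mathbf{s}$ and using that $\{V\mathbf{s}U(E_{\mathbf{s}})\}$ is a Lebesgue partition recovers $\int g\,dN(\mathbf{0},\Psi_j)$. Hence $N(\mathbf{0},\Sigma_j)T^{-1}=N(\mathbf{0},\Psi_j)$ for every $j$, and the case $\Sigma_0=I$, $\Psi_0=I$ gives $N(\mathbf{0},I)T^{-1}=N(\mathbf{0},I)$.

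All the hard analysis (the Laplace-transform argument and analytic-function reasoning behind Proposition~\ref{prop2.1}, and the polar-decomposition argument of Lemma~\ref{lem4.3}) is packaged in the preparatory results, so the theorem itself is mostly organizational. The two places needing care are, on the necessity side, the passage from the cell-wise identities to statement~(iii), where one must keep track of the signs $s_j$ and of the fact that $V\mathbf{s}U$ and $B\mathbf{s}A$ need only coincide on the cells of positive measure; and, on the sufficiency side, the null-set bookkeeping that lets one both invert $T$ and reassemble the integral over the partition $\{V\mathbf{s}U(E_{\mathbf{s}})\}$. I do not anticipate any step that is genuinely difficult.
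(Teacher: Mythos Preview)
Your proof is correct and follows essentially the same route as the paper: necessity is obtained by invoking Lemmas~\ref{lem4.1}--\ref{lem4.3} exactly as the paper does, and sufficiency is the same change-of-variables computation, the only cosmetic difference being that the paper uses exponential test functions $e^{\boldsymbol{\lambda}^{\prime}\mathbf{x}}$ (a Laplace-transform argument) in place of your bounded Borel $g$. Your explicit tracking of the signs $s_j$ when deriving~(iii) is a helpful clarification of a point the paper leaves implicit.
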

\vskip0.1in
\begin{proof}
The only if part is contained in Lemma \ref{lem4.2} and Lemma
\ref{lem4.3} and their proofs. Conversely, suppose there exist
orthogonal matrices $U,V$ and a Lebesgue partition $\{E_{\mathbf{s}},
\mathbf{s} \in D_n\}$ such that (i), (ii) and (iii) hold. Then for
every pair $(\Sigma, \Psi) =(I,I), (\Sigma_j, \Psi_j), 1 \leq j \leq n $
we have 
\begin{eqnarray*}
\lefteqn{\int e^{\mathbf{\lambda}^{\prime} T(\mathbf{x}) -
\frac{1}{2} \mathbf{x}^{\prime} \Sigma^{-1}  \mathbf{x}} d \mathbf{x}   
} \\
&=& \Sigma_{\mathbf{s}} \int_{E_{\mathbf{s}}} e^{
\mathbf{\lambda}^{\prime} V \mathbf{s} U \mathbf{x} -
\frac{1}{2}\mathbf{x}^{\prime} \Sigma^{-1}  \mathbf{x}  } d\mathbf{x}\\
&=& \Sigma_{\mathbf{s}} \int_{V \mathbf{s} U(E_{\mathbf{s}})}
e^{\mathbf{\lambda}^{\prime} \mathbf{x} - \frac{1}{2}
\mathbf{x}^{\prime} \Psi^{-1} \mathbf{x} }d \mathbf{x} \\
&=& \int e^{\mathbf{\lambda}^{\prime} \mathbf{x} - \frac{1}{2}
\mathbf{x}^{\prime} \Psi^{-1} \mathbf{x} }d \mathbf{x}.
\end{eqnarray*}
Since $|\Sigma| = |\Psi|$ it follows that $N(\mathbf{0}, \Sigma) T^{-1} =
N(\mathbf{0}, \Psi).$
\end{proof}
\vskip0.1in
\begin{corollary}\label{cor4.5}
Let $T$ be a Borel automorphism of $\mathbb{R}^n$ and let $\Sigma_j,$
$0 \leq j \leq n$ be elements in $\mathcal{S}_{+}(n)$ satisfying the
following
conditions:
\begin{enumerate}
 \item[(1)] rank$(\Sigma_j - \Sigma_0) = 1.$
\vskip0.1in
  \item[(2)] Range$(\Sigma_j - \Sigma_0) = \mathbb{R} \mathbf{u}_j$ with
$\mathbf{u}_j$ as a unit vector where $\mathbf{u}_1, \mathbf{u}_2,
\ldots, \mathbf{u}_n$ is a basis for $\mathbb{R}^n.$
\end{enumerate}
Then $N(\mathbf{0}, \Sigma_j)T^{-1} =N(\mathbf{0}, \Psi_j)$ for every
$j,$ for some covariance matrices $\Psi_0,\Psi_1, \ldots, \Psi_n $ if
and only if there exist orthogonal matrices $U,V$ of order $n$ and a
Lebesgue partition $\{E_{\mathbf{s}}, \mathbf{s} \in D_n\}$ such that
$\{V \mathbf{s} U (E_{\mathbf{s}} ), \mathbf{s} \in D_n \}$ is also a
Lebesgue partition and
$$T(\mathbf{x}) = \Sigma_{\mathbf{s} \in D_{n}} 1_{E_{\mathbf{s}}}
(\Sigma_0^{-1/2} \mathbf{x}) \Psi_0^{1/2} V s U \Sigma_0^{-1/2}
\mathbf{x} \quad \mbox{a.e.} \,\,\mathbf{x}. $$
\end{corollary}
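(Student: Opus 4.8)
The plan is to reduce the whole statement to Theorem~\ref{thm4.4} by conjugating $T$ with the nonsingular linear maps that normalize $\Sigma_0$ and $\Psi_0$ to the identity. In the forward direction $N(\mathbf{0},\Sigma_0)T^{-1}=N(\mathbf{0},\Psi_0)$ with $\Psi_0\in\mathcal{S}_{+}(n)$, so I would introduce the Borel automorphism $T'(\mathbf{x})=\Psi_0^{-1/2}\,T(\Sigma_0^{1/2}\mathbf{x})$ and observe, using the transformation rule for push-forwards of centered gaussians under linear maps, that with $\widetilde\Sigma_j:=\Sigma_0^{-1/2}\Sigma_j\Sigma_0^{-1/2}$ one has $N(\mathbf{0},\widetilde\Sigma_j)(T')^{-1}=N(\mathbf{0},\Psi_0^{-1/2}\Psi_j\Psi_0^{-1/2})$ for every $j$; for $j=0$ this reads $N(\mathbf{0},I)(T')^{-1}=N(\mathbf{0},I)$.

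The second step is to check that the $\widetilde\Sigma_j$, $1\le j\le n$, are exactly of the type allowed in Theorem~\ref{thm4.4}. Writing the rank-one symmetric matrix $\Sigma_j-\Sigma_0=c_j\mathbf{u}_j\mathbf{u}_j^{\prime}$ with $c_j\neq 0$, one gets $\widetilde\Sigma_j-I=c_j\,(\Sigma_0^{-1/2}\mathbf{u}_j)(\Sigma_0^{-1/2}\mathbf{u}_j)^{\prime}=\widetilde\varepsilon_j\,\mathbf{w}_j\mathbf{w}_j^{\prime}$, where $\mathbf{w}_j:=\Sigma_0^{-1/2}\mathbf{u}_j/\|\Sigma_0^{-1/2}\mathbf{u}_j\|$ is a unit vector and $\widetilde\varepsilon_j:=c_j\|\Sigma_0^{-1/2}\mathbf{u}_j\|^{2}\neq 0$. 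Since $\widetilde\Sigma_j$ is congruent to the positive definite matrix $\Sigma_j$ it is itself positive definite, which is precisely the requirement $\widetilde\varepsilon_j>-1$; and $\{\mathbf{w}_j,1\le j\le n\}$ is a basis because $\Sigma_0^{-1/2}\in GL(n)$ carries the basis $\{\mathbf{u}_j\}$ to a basis. Thus $T'$ satisfies the hypotheses of Theorem~\ref{thm4.4} with $\widetilde\Sigma_j=I+\widetilde\varepsilon_j\mathbf{w}_j\mathbf{w}_j^{\prime}$.

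Applying Theorem~\ref{thm4.4} to $T'$ produces orthogonal $U,V$ and a Lebesgue partition $\{E_{\mathbf{s}}\}$ over $D_n$ with $\{V\mathbf{s}U(E_{\mathbf{s}})\}$ again a Lebesgue partition, $T'(\mathbf{x})=\sum_{\mathbf{s}}1_{E_{\mathbf{s}}}(\mathbf{x})\,V\mathbf{s}U\mathbf{x}$ a.e., and, among the $\mathbf{s}$ with $L(E_{\mathbf{s}})>0$, the matrix $(V\mathbf{s}U)\widetilde\Sigma_j(V\mathbf{s}U)^{-1}$ independent of such $\mathbf{s}$. Substituting $\mathbf{x}\mapsto\Sigma_0^{-1/2}\mathbf{x}$ in $T(\mathbf{x})=\Psi_0^{1/2}T'(\Sigma_0^{-1/2}\mathbf{x})$, and using that the nonsingular linear map $\Sigma_0^{-1/2}$ preserves Lebesgue-null sets so that the ``a.e.'' survives, yields $T(\mathbf{x})=\sum_{\mathbf{s}}1_{E_{\mathbf{s}}}(\Sigma_0^{-1/2}\mathbf{x})\,\Psi_0^{1/2}V\mathbf{s}U\,\Sigma_0^{-1/2}\mathbf{x}$ a.e.\ (the compatibility property rewrites, after conjugation by $\Sigma_0^{1/2}$ and $\Psi_0^{1/2}$, as the corresponding condition on the $\Sigma_j$). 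Conversely, given $T$ of this form the same substitution gives $T'(\mathbf{x}):=\Psi_0^{-1/2}T(\Sigma_0^{1/2}\mathbf{x})=\sum_{\mathbf{s}}1_{E_{\mathbf{s}}}(\mathbf{x})V\mathbf{s}U\mathbf{x}$ a.e., so the converse half of Theorem~\ref{thm4.4} makes each $N(\mathbf{0},\widetilde\Sigma_j)(T')^{-1}$ a nonsingular gaussian $N(\mathbf{0},\widetilde\Psi_j)$, and transporting back by $\Sigma_0^{1/2},\Psi_0^{1/2}$ gives $N(\mathbf{0},\Sigma_j)T^{-1}=N(\mathbf{0},\Psi_0^{1/2}\widetilde\Psi_j\Psi_0^{1/2})$ for all $j$. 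I expect no deep obstacle here: the only real work is the bookkeeping of the conjugation — pinning down the exact shape $\widetilde\Sigma_j=I+\widetilde\varepsilon_j\mathbf{w}_j\mathbf{w}_j^{\prime}$ with $\widetilde\varepsilon_j>-1$ (this is where positive definiteness of $\Sigma_j$ is used) and checking that all the a.e.\ identities are stable under the linear changes of variable; the substantive content sits entirely inside Theorem~\ref{thm4.4}.
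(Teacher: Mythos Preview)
Your proof is correct and follows exactly the paper's approach---conjugating $T$ by $\Sigma_0^{1/2}$ and $\Psi_0^{-1/2}$ to reduce to Theorem~\ref{thm4.4}---only with considerably more detail (the rank-one form of $\widetilde\Sigma_j$, the check that $\widetilde\varepsilon_j>-1$, the basis property of the $\mathbf{w}_j$, and the converse) than the paper's two-line argument. In fact your formula $T'(\mathbf{x})=\Psi_0^{-1/2}T(\Sigma_0^{1/2}\mathbf{x})$ is the one consistent with the displayed expression for $T(\mathbf{x})$ in the corollary; the paper's proof writes $\Sigma_0^{-1/2}$ inside $T$, which appears to be a misprint.
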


\vskip0.1in
\begin{proof}
This is immediate from Theorem \ref{thm4.4} and the fact that the
Borel automorphism $T^{\prime}$ defined by
$$T^{\prime} (\mathbf{x}) = \Psi_0^{-1/2} T(\Sigma_0^{-1/2} \mathbf{x})
$$
preserves $N(0, I)$ and transforms $N(\mathbf{0}, \Sigma_0^{-1/2}
\Sigma_j \Sigma_0^{-1/2})$ to a mean zero gaussian probability measure
for each $j = 1,2, \ldots, n.$
\end{proof}

\vskip0.1in
\begin{remark}\label{rem4.6}
Let $\mathbb{R} = F_{j+} \cup F_{j-}$ be a partition of $\mathbb{R}$
into two disjoint symmetric Borel subsets, i.e. $F_{j+} = -F_{j+}$
and $F_{j-} = -F_{j-}$ for each $j=1,2,\ldots,n$ and for $\mathbf{s}
= \diag (s_1, s_2, \ldots, s_n)$ in $D_n$ let
$$F_{\mathbf{s}} = F_{s_{1}}  \times F_{s_{2}}  \times \cdots \times
F_{s_{n}}$$
where
$$F_{s_{j}} = \left \{\begin{array}{lcl} F_{j+}  &
\mbox{if}& s_j = +1,\\  F_{j-} &\mbox{if}& s_j = -1. \end{array}
\right . $$
If $U,V$ are orthogonal matrices of order $n,$ putting
$E_{\mathbf{s}} = U^{-1} F_{\mathbf{s}}$ for every $\mathbf{s}$ we
note that $\{E_{\mathbf{s}}, \mathbf{s} \in D_n \}$ and $\{V
\mathbf{s} U E_{\mathbf{s}}, \mathbf{s} \in D_n\}$ are Lebesgue
partitions of $\mathbb{R}^n.$ Indeed, $V \mathbf{s} U E_{\mathbf{s}}
= V F_{\mathbf{s}}$ for every $\mathbf{s}.$
\end{remark}

\vskip0.1in
\begin{remark}\label{rem4.7}
If $\{E_{\mathbf{s}}, \mathbf{s} \in D_n \}$ happens to be a trivial
Lebesgue partition in the sense that for some $\mathbf{s}_0,$ $L
(\mathbb{R}^n \backslash E_{\mathbf{S}_{0}}) = 0$ then the Borel
automorphism $T$ in Theorem \ref{thm4.4} is an orthogonal
transformation a.e. whereas in Theorem 4.5 is a nonsingular linear
transformation a.e. in $\mathbb{R}^n.$
\end{remark}

\vskip0.1in
\begin{remark}\label{rem4.8}
In Theorem \ref{thm4.4}, let
$$D = \{ \mathbf{s} \left |L(E_{\mathbf{s}}) > 0  \right . \} \subset
D_n $$
and let $T$ satisfy (i)-(iv). Suppose
$$\mathcal{C} = \{A \left | A \in \mathcal{S}_{+}(n), \right
.\mathbf{s}
UAU^{-1} \mathbf{s}\,\,\mbox{is independent of} \,\,\mathbf{s}
\,\,\mbox{varying in} \,\, D.\}. $$
Then
$$N(\mathbf{0}, A)T^{-1} = N(\mathbf{0}, V \mathbf{s} UA U^{-1}
\mathbf{s} V^{-1}) \,\, \forall \,\, A \in \mathcal{C}, \mathbf{s}
\in D.$$
\end{remark}
\vskip0.2in
\section*{Acknowledgement} The author is grateful to B. G.
Manjunath for bringing his attention to all the references in this
paper and particularly to the results of S. Nabeya and T. Kariya
\cite{nskt}.

\end{document}